\numberwithin{equation}{section}
\numberwithin{figure}{section}
\theoremstyle{plain}
\newtheorem{thm}{\protect\theoremname}
  \theoremstyle{plain}
  \newtheorem{lem}[thm]{\protect\lemmaname}
  \theoremstyle{remark}
  \theoremstyle{plain}
  \newtheorem{prop}[thm]{\protect\propositionname}
  \providecommand{\lemmaname}{Lemma}
  \providecommand{\propositionname}{Proposition}
  \providecommand{\remarkname}{Remark}
\providecommand{\theoremname}{Theorem}
\begin{document}

\title[On Yamabe type problems on  Riemannian manifolds
with boundary]{On Yamabe type problems on  Riemannian manifolds
with boundary}

\author{Marco Ghimenti}
\address[Marco Ghimenti]{Dipartimento di Matematica
Universit\'a di Pisa
Via F. Buonarroti 1/c , I - 56127 Pisa, Italy}
\email{marco.ghimenti@dma.unipi.it }
\author{Anna Maria Micheletti}
\address[Anna Maria Micheletti]{Dipartimento di Matematica
Universit\'a di Pisa
Via F. Buonarroti 1/c , I - 56127 Pisa, Italy}
\email{a.micheletti@dma.unipi.it }

\author{Angela Pistoia}
\address[Angela Pistoia] {Dipartimento SBAI, Universt\`{a} di Roma ``La Sapienza", via Antonio Scarpa 16, 00161 Roma, Italy}
\email{angela.pistoia@uniroma1.it}

\begin{abstract}
Let $(M,g)$ be a $n-$dimensional compact Riemannian manifold with boundary. We consider the Yamabe type problem  
\begin{equation} 
\left\{ \begin{array}{ll}
-\Delta_{g}u+au=0 & \text{ on }M\\
 \partial_\nu u+\frac{n-2}{2}bu=  u^{{n\over n-2}\pm\varepsilon}  & \text{ on }\partial M
\end{array}\right.
\end{equation}
where $a\in C^1(M),$ $b\in C^1(\partial M)$, $\nu$ is the outward pointing unit normal to $\partial M $ and $\varepsilon$ is a small positive parameter. 
We build  solutions  which blow-up at a point of the   boundary  as $\varepsilon$ goes to zero. The blowing-up behavior is ruled by    the function $b-H_g ,$ where $H_g$  is the boundary mean curvature.
\end{abstract}

\keywords{Yamabe problem,   blowing-up solutions, compactness}

\subjclass[2000]{}

\maketitle

\section{Introduction}
Let $(M,g)$ be a smooth, compact Riemannian manifold of dimension
$n\ge3$ with a boundary $\partial M$ which is the union of a finite
number of smooth closed compact submanifolds embedded in $M$.\\

A well known problem in differential geometry is whether $(M,g)$ is necessarily conformally equivalent to a manifold of constant scalar curvature whose   boundary is minimal.
When the boundary is empty this is called the Yamabe problem (see Yamabe \cite{Y}), which has been completely solved by Aubin [A], Schoen \cite{S} and Trudinger \cite{T}.
Cherrier \cite{C} and Escobar \cite{E1,E2} studied the problem in the context of manifolds with boundary and gave an affirmative solution to the question in almost every case. The remaining cases where studied  by Marques \cite{M1,M2}, by Almaraz \cite{A1} and  by Brendle and Chen \cite{BC}.
\\
 
Once the problem is solvable, a natural question about compactness of the full set of solutions arises. Concerning the Yamabe problem, it was first raised    by Schoen in a topic course at Stanford University in 1988. 
A necessary condition is that the manifold is not conformally equivalent to the standard sphere $\mathbb S^n,$ since the group of conformal transformation of the round sphere is not compact itself. The problem of compactness  has been widely studied in the last years and it has been completely solved by Brendle \cite{B}, Brendle and Marques \cite{BM} and Khuri, Marques and Schoen \cite{KMS}.

In the presence of a boundary, 
a necessary condition is that
$M$ is not conformally equivalent to the standard ball $\mathbb B^n.$ The problem when the boundary of the manifold is not empty
  has been studied    by   V.
Felli and M. Ould Ahmedou   \cite{FO1,FO2}, Han and Li \cite{HL} and by Almaraz \cite{A2,A3} . In particular,   Almaraz  studied the compactness property in the case of  scalar-flat metrics. Indeed the zero scalar curvature case is  particularly interesting because it leads to study  a linear equation in the interior with a critical Neumann-type nonlinear boundary condition
\begin{equation}\label{ym}
\left\{ \begin{array}{ll}
-\Delta_{g}u+{n-2\over4(n-1)}R_gu=0 & \text{ on }M,\quad
u>0  \text{ in }M\\
 \partial_\nu u+\frac{n-2}{2}H_gu=  u^{n\over n-2}  & \text{ on }\partial M
\end{array}\right.
\end{equation}
where $\nu$ is the outward pointing unit normal to $\partial M,$ $R_g$ is the scalar  curvature of $M$ with respect to $g$ and $H_g$ is the boundary mean curvature with respect to $g$.

We note that in this case compactness of solutions is equivalent to establish a priori estimates for solutions to equation \eqref{ym}.
Almaraz in \cite{A3} proved that  compactness holds for a generic metric $g$. On the other hand, in \cite{A2} proved that if the dimension of the manifold is $n\ge25$ compactness does not hold  because it is possible to build blowing-up solutions to problem \eqref{ym} for a suitable metric  $g.$
We point out that the problem of compactness when the dimension of the manifold $n\le 24$   is still   not completely understood.
\\

An interesting issue, closely related to compactness property, is the stability problem. 
 One can ask whether or not the compactness property is preserved under perturbations of the equation, which
is equivalent to have or not uniform a-priori estimates for solutions of the perturbed problem. Let us consider the more general problem 
\begin{equation}\label{gp}
\left\{ \begin{array}{ll}
-\Delta_{g}u+a(x)u=0 & \text{ in } M,\quad u>0 \text{ in } M\\
 \partial_\nu u+b(x)u=  u^{n\over n-2}  & \text{ on }\partial M.
\end{array}\right.
\end{equation}
We say that  problem \eqref{gp} is stable if for any sequences of $C^1$ functions $a_\varepsilon:M\to \mathbb R$ and $b_\varepsilon:\partial M\to \mathbb R$ converging in $C^1$ to functions $a :M\to \mathbb R$ and $b :\partial M\to \mathbb R,$ for any sequence of exponents $p_\varepsilon:={n\over n-2}\pm\varepsilon$
converging to the critical one ${n\over n-2}$ and for any sequence of associated  solutions $u_\epsilon$ bounded in $H_g^1(M)$ of the perturbed problems
\begin{equation}\label{pp}
\left\{ \begin{array}{ll}
-\Delta_{g}u+a_\varepsilon(x)u=0 & \text{ in } M,\quad u_\varepsilon>0 \text{ in } M\\
 \partial_\nu u+{n-2\over2 }b_\varepsilon(x)u=  u_\varepsilon^{{n\over n-2}\pm\varepsilon}  & \text{ on }\partial M.
\end{array}\right.
\end{equation}
there is a subsequence $u_{\varepsilon_k}$ that converges in $C^2$ to a solution to the limit problem \eqref{gp}.
The stability of the Yamabe problem has been introduced and studied by Druet in \cite{D1,D2} and by Druet and Hebey in \cite{DH1,DH2}.
Recently, Esposito Pistoia and Vetois \cite{EPV}, Micheletti, Pistoia and Vetois \cite{MPV} and Esposito and Pistoia \cite{EP}  prove that a priori estimates fail for perturbations of the linear potential or of the exponent.
\\

In the present paper, we investigate the question of stability of problem \eqref{gp}.
It is clear that it is not stable if it possible to build solutions $u_\varepsilon$ to perturbed problems \eqref{pp} which blow-up at one or more points of the manifold as the parameter $\varepsilon$ goes to zero. Here,  we show that the behavior of the sequence $u_\epsilon$ is dictated by the difference 
\begin{equation}\label{diff}
\varphi(q)=b(q)-H_{g}(q)\ \text{ for } q\in\partial M,
\end{equation}
 More precisely, we will  consider the problem 
\begin{equation}
\left\{ \begin{array}{ll}
-\Delta_{g}u+a(x)u=0 & \text{ on }M, \quad u >0 \text{ in } M\\
\frac{\partial}{\partial\nu}u+\frac{n-2}{2}b(x)u=  u^{{n\over n-2}\pm\varepsilon} \text{ on }\partial M
\end{array}\right.\label{eq:1.2}
\end{equation}
We will assume that   $a\in C^1(M)$, $b\in C^1(\partial M)$ are such that the linear operator ${\mathcal L}u:=-\Delta_gu+a u$ with Neumann boundary condition ${\mathcal B} u:=\partial _\nu u+{n-2\over 2} bu$ is coercive, namely there exists a constant $c>0$ so that
\begin{equation}\label{coe}
\int_{M}\left(|\nabla_{g}u|^{2}+a(x)u^{2}\right)d\mu_{g}+\frac{n-2}{2}\int_{\partial M}b(x)u^{2}d\sigma\ge c\|u\|_{H ^{1}(M)}^{2}.
\end{equation}
Here $\varepsilon$ is a small positive parameter. The problem \eqref{eq:1.2} turns out  to be either slightly subcritical or slightly supercritical  if the exponent in the nonlinearity is either ${{n\over n-2}-\varepsilon}$ or ${{n\over n-2}+\varepsilon}$, respectively.
Let us state our main result.

\begin{thm}
\label{thm:main} Assume \eqref{coe} and $n\ge7$. 
\begin{itemize}
\item[(i)]
If $q_{0}\in\partial M$ is a strict local minimum point of the function $\varphi$ defined in \eqref{diff} with $\varphi(q_{0})>0,$
then provided  $\varepsilon>0$ is small enough there exists
a solution $u_{\varepsilon}$ of (\ref{eq:1.2}) in the slightly subcritical
case such that $u_{\varepsilon}$ blows up at a boundary point when $\varepsilon\rightarrow0^{+}$.

\item[(ii)]
If $q_{0}\in\partial M$ is a strict local maximum point of the function $\varphi$ defined in \eqref{diff} with $\varphi(q_{0})>0,$
then provided  $\varepsilon>0$ is small enough there exists
a solution $u_{\varepsilon}$ of (\ref{eq:1.2}) in the supercritical
case such that $u_{\varepsilon}$ blows up at a boundary point when $\varepsilon\rightarrow0^{+}$.
\end{itemize}
\end{thm}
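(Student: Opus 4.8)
The plan is to build the blow-up solutions by a finite-dimensional Lyapunov--Schmidt reduction. The natural building block is the standard boundary bubble, i.e.\ the positive solution $U$ of the limiting half-space problem
\begin{equation*}
-\Delta U=0\ \text{ in }\mathbb{R}^n_+,\qquad \partial_\nu U=U^{\frac{n}{n-2}}\ \text{ on }\partial\mathbb{R}^n_+,
\end{equation*}
together with its rescalings $U_\delta(z)=\delta^{-\frac{n-2}{2}}U(z/\delta)$, $\delta>0$. Fixing a boundary point $q\in\partial M$ and using Fermi coordinates centred at $q$, I would transplant $U_\delta$ onto $(M,g)$ to produce a two-parameter family of approximate solutions $W_{\delta,q}$, correcting it if necessary by the solution of a linear problem that cancels the leading geometric error. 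In the subcritical case, solutions of \eqref{eq:1.2} are critical points of the energy $J_\varepsilon$ on $H^1(M)$, whose quadratic part is coercive by \eqref{coe}; I would then write $u=W_{\delta,q}+\phi$ and split the problem into an infinite-dimensional (auxiliary) equation for $\phi$ and a finite-dimensional (bifurcation) equation for $(\delta,q)$.

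The next step is the linear theory. After rescaling, the linearization at $W_{\delta,q}$ is a perturbation of the linearized half-space operator, whose kernel is spanned by the $n$ functions obtained by differentiating $U$ with respect to the dilation and to the $(n-1)$ tangential translations; this non-degeneracy of the boundary bubble is the crucial analytic input. Imposing that the remainder $\phi$ be orthogonal to an approximate kernel, the coercivity \eqref{coe} yields uniform invertibility of the linearized operator on the orthogonal complement, and a contraction mapping argument then produces, for each small $\delta$ and each $q$, a unique small $\phi=\phi_\varepsilon(\delta,q)$ solving the auxiliary equation, with quantitative $H^1$-bounds and smooth dependence on the parameters.

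The core of the argument is the asymptotic expansion of the reduced energy $\widetilde J_\varepsilon(\delta,q):=J_\varepsilon\bigl(W_{\delta,q}+\phi_\varepsilon(\delta,q)\bigr)$, whose critical points are genuine solutions of \eqref{eq:1.2}. Expanding the metric in Fermi coordinates produces the boundary mean curvature $H_{g}$, while the linear boundary term $\tfrac{n-2}{2}b\,u^2$ produces $b$; these enter at the same order and combine into the function $\varphi=b-H_{g}$ of \eqref{diff}. Differentiating the slightly sub/super-critical nonlinearity with respect to the exponent yields the $\varepsilon$-contribution, and one is led to an expansion of the schematic form
\begin{equation*}
\widetilde J_\varepsilon(\delta,q)=c_0+c_1\,\varphi(q)\,\delta\ \pm\ c_2\,\varepsilon\,\log\tfrac1\delta+\cdots,\qquad c_1,c_2>0,
\end{equation*}
with the sign of the $\varepsilon$-term opposite in the two regimes and the dots denoting lower-order terms. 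The hypothesis $n\ge7$ is what guarantees that the remaining geometric corrections and the contribution of $\phi$ are of strictly lower order than the two displayed competing terms. Optimizing in the concentration parameter forces a scale $\delta_\varepsilon\sim\varepsilon/\varphi(q)$, which is admissible precisely because $\varphi(q_0)>0$; substituting back, the leading $q$-dependence of the reduced functional on $\partial M$ is a strictly monotone function of $\varphi(q)$, increasing in the subcritical and decreasing in the supercritical case. Hence one looks for a strict local minimum of this reduced functional, which is realized at a strict local minimum (resp.\ maximum) of $\varphi$; this is exactly the dichotomy of the statement.

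To conclude, a strict local minimum (resp.\ maximum) $q_0$ of $\varphi$ is stable under the small perturbations coming from $\phi$ and from the lower-order terms, so a standard topological/degree or $\min$--$\max$ argument on a small neighbourhood of $q_0$ produces a critical point $(\delta_\varepsilon,q_\varepsilon)$ of $\widetilde J_\varepsilon$ with $\delta_\varepsilon\to0$, that is, a solution blowing up at a boundary point. I expect two steps to be the main obstacles. The first is the precise energy expansion above: tracking the exact constants and, above all, the correct sign of the $\varepsilon$-term in Fermi coordinates is delicate and is what ultimately forces the min/max alternative. The second, specific to part (ii), is that for a supercritical exponent $J_\varepsilon$ is not well defined on $H^1(M)$, so the variational scheme must be replaced by a direct fixed-point reduction carried out in suitable weighted (or H\"older) spaces; establishing uniform invertibility of the linearized operator in that setting, uniformly as $\varepsilon\to0^+$, is the technically hardest part.
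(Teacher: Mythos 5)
Your scheme coincides with the paper's in every structural respect for the subcritical case: the same boundary bubble $U$ and its rescalings, the same Fermi-coordinate transplant $W_{\delta,q}$, orthogonality to the $n$-dimensional approximate kernel spanned by the dilation and tangential derivatives of $U$ (Almaraz's nondegeneracy result), uniform invertibility of the linearized operator on the orthogonal complement plus a contraction argument producing $\phi_{\delta,q}$ with $\|\phi_{\delta,q}\|=O(\varepsilon|\ln\varepsilon|)$, and the reduced expansion $I_\varepsilon(d,q)=c_n(\varepsilon)+\varepsilon\left[\alpha_n d\,\varphi(q)\mp\beta_n\ln d\right]+o(\varepsilon)$ at the scale $\delta=d\varepsilon$, concluded by the $C^0$-stability of a strict minimum over a compact neighborhood of $(d_0,q_0)$ with $d_0=\beta_n/(\alpha_n\varphi(q_0))$ (the paper uses this stable-minimum argument rather than degree, but these are interchangeable). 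Your reading of the hypothesis $n\ge7$ also matches its actual role: it guarantees the decay needed for the $O(\delta^2)$ remainder bounds (e.g.\ $U\in L^{\frac{2n}{n+2}}(\mathbb{R}^n_+)$), so that all corrections are of lower order than the two competing terms. The one genuinely different choice is your treatment of part (ii): you propose abandoning the variational setting in favor of a fixed-point reduction in weighted H\"older spaces, and you identify the uniform linear theory there as the hardest step. The paper instead keeps the variational Ljapunov--Schmidt scheme by replacing $H^1(M)$ with the Banach space $\mathcal{H}=H^1(M)\cap L^{s_\varepsilon}(\partial M)$, $s_\varepsilon=\frac{2(n-1)}{n-2}+n\varepsilon$, on which $J_\varepsilon$ is well defined even for the supercritical exponent, and invokes Nittka's $L^p$ boundary regularity theory (Theorem 3.14 of \cite{N}) to show that $i^*$ maps $f_\varepsilon(u)$ back into $L^{s_\varepsilon}(\partial M)$; the slight supercriticality (the extra $n\varepsilon$ in $s_\varepsilon$) is exactly what makes this bootstrapping close, and it reduces part (ii) to the same energy computation as part (i), avoiding entirely the rebuilt linear theory your route would require.

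One caution about your finite-dimensional discussion: the statement that ``optimizing in the concentration parameter forces $\delta_\varepsilon\sim\varepsilon/\varphi(q)$'' is literally true only in the subcritical regime, where the $d$-profile $\alpha_n d\,\varphi(q)+\beta_n\ln(1/d)$ has an interior minimum. In the supercritical regime the expansion carries the opposite sign, $\alpha_n d\,\varphi(q)+\beta_n\ln d$, which for $\varphi(q)>0$ is strictly increasing in $d$, so the critical point of the reduced energy cannot be produced by minimizing in $\delta$ and substituting back as you describe; the pairing of the supercritical case with a maximum of $\varphi$ rests on the opposite concavity of the $d$-profile, and getting this sign bookkeeping straight is precisely the delicacy you flagged at the end. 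Note that the paper itself is terse here: it carries out the stability argument only in the subcritical case and asserts the supercritical case ``follows in a similar way,'' so this is the step of your proposal (and of the paper's proof) that most needs to be written out in full.
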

 
 Our result does not concern  the stability of the geometric Yamabe problem \eqref{ym}. Indeed,  the function $\varphi$ in \eqref{diff} turns out to be identically zero.
 In this case it is interesting to discover  the function which rules the behavior of blowing-up sequences in this case.
 We expect that    it    depends on  trace-free 2nd fundamental form  as it is suggested by Almaraz in \cite{A3}, where a compactness result in the subcritical case is established.\\

 It also remains open the case of low dimension, where we expect that the function $\varphi$ in \eqref{diff}
should be replaced by a function which depends on the Weyl tensor of the boundary, as it is suggested by Escobar in \cite{E1,E2}.
\\

The proof of our result relies on a very well known Ljapunov-Schmidt procedure. In Section \ref{uno} we set the problem, in Section \ref{due} we reduce the problem to a finite dimensional one, which is studied in  Section \ref{tre}.
\section{Setting of the problem }\label{uno}

Let us rewrite problem \eqref{eq:1.2} in a more convenient way.

First of all, assumption \eqref{coe} allows to   endow the Hilbert space $H:=H^{1} (M)$ with the following scalar
product 
\[
\left\langle \left\langle u,v\right\rangle \right\rangle _{H}:=\int_{M}\left(\nabla_{g}u\nabla_{g}v+a(x)uv\right)d\mu_{g}+\frac{n-2}{2}\int_{\partial M}b(x)uvd\sigma
\]
and the induced norm $\|u\|_{H}^{2}:=\left\langle \left\langle u,u\right\rangle \right\rangle _{H}.$
We define the exponent 
\[
s_{\varepsilon}=\left\{ \begin{array}{cc}
\frac{2(n-1)}{n-2} & \text{in the subcritical case}\\
\\
\frac{2(n-1)}{n-2}+n\varepsilon & \text{in the supercritical case}
\end{array}\right.
\]
and the Banach space $\mathcal{H}:=H^{1}(M)\cap L^{s_{\varepsilon}}(\partial M)$
endowed with norm $\|u\|_{\mathcal{H}}=\|u\|_{H}+|u|_{L^{s_{\varepsilon}}(\partial M)}.$

We notice that in the subcritical case $\mathcal{H}$ is nothing but   
the Hilbert space $H$. 

By trace theorems, we have the following inclusion $W^{1,\tau}(M)\subset L^{t}(\partial M)$
for $t\le\tau\frac{n-1}{n-\tau}$. 

We consider $i:H^{1}(M)\rightarrow L^{\frac{2(n-1)}{n-2}}(\partial M)$
and its adjoint with respect to $\left\langle \left\langle \cdot,\cdot\right\rangle \right\rangle _{H}$
\[
i^{*}:L^{\frac{2(n-1)}{n}}(\partial M)\rightarrow H^{1}(M)
\]
defined by
\[
\left\langle \left\langle \varphi,i^{*}(g)\right\rangle \right\rangle _{H}=\int_{\partial M}\varphi gd\sigma\text{ for all }\varphi\in H^{1}
\]
so that $u=i^{*}(g)$ is the weak solution of the problem
\begin{equation}
\left\{ \begin{array}{ll}
-\Delta_{g}u+a(x)u=0 & \text{ on }M\\
\frac{\partial}{\partial\nu}u+\frac{n-2}{2}b(x)u=g & \text{ on }\partial M
\end{array}\right..\label{eq:istellasopra}
\end{equation}

We recall that by (see \cite{N}) we have that, if $u\in H^{1}$ is a
solution of (\ref{eq:istellasopra}), then for $\frac{2n}{n+2}\le q\le\frac{n}{2}$
and $r>0$ it holds
\begin{equation}
\|u\|_{L^{\frac{(n-1)q}{n-2q}}(\partial M)}=\|i^{*}(g)\|_{L^{\frac{(n-1)q}{n-2q}}(\partial M)}\le\|g\|_{L^{\frac{(n-1)q}{n-q}+r}(\partial M)}.\label{eq:nittka}
\end{equation}
By this result, we can choose $q,r$ such that
\begin{equation}
\frac{(n-1)q}{n-2q}=\frac{2(n-1)}{n-2}+n\varepsilon\text{ and }\frac{(n-1)q}{n-q}+r=\frac{2(n-1)+n(n-2)\varepsilon}{n+(n-2)\varepsilon}\label{eq:nittka1}
\end{equation}
that is 
\[
q=\frac{2n+n^{2}\left(\frac{n-2}{n-1}\right)\varepsilon}{n+2+2n\left(\frac{n-2}{n-1}\right)\varepsilon}\text{ and }r=\frac{2(n-1)+n(n-2)\varepsilon}{n+(n-2)\varepsilon}-\frac{2(n-1)+n(n-2)\varepsilon}{n+\left(n-2\right)\left(\frac{n}{n-1}\right)\varepsilon};
\]
so we have that, if $u\in L^{\frac{2(n-1)}{n-2}+n\varepsilon}(\partial M)$,
then $ |u|^{{n\over n-2}+\varepsilon}\in L^{\frac{2(n-1)+n(n-2)\varepsilon}{n+\varepsilon(n-2)}}(\partial M)$
and, in light of (\ref{eq:nittka}), that also $i^{*}\left( |u|^{{n\over n-2}+\varepsilon}\right)\in L^{\frac{2(n-1)}{n-2}+n\varepsilon}(\partial M)$. 

Finally, we rewrite  problem (\ref{eq:1.2}) -both in the subcritical
and in the supercritcal case- as 
\begin{equation}\label{pre}
u=i^{*}\left(f_{\varepsilon}(u)\right),\ u\in\mathcal{H},
\end{equation}
where the nonlinearity $f_{\varepsilon}(u)$ is defined as  $f_{\varepsilon}(u)  :=(n-2)(u^+)^{\frac{n}{n-2}+\varepsilon}$
in  the supercritical case or  $f_{\varepsilon}(u) :=(n-2)(u^+)^{\frac{n}{n-2}-\varepsilon}$
in the subcritical case. Here $u^+(x):=\max\{0,u(x)\}$.
By assumption \eqref{coe},  a solution to problem \eqref{pre} is strictly positive and actually it is a solution to problem \eqref{eq:1.2}.
Therefore, we are led to build   solutions to problem \eqref{pre} which blow-up at a boundary point as $\varepsilon$ goes to zero.

The main ingredient to cook up our solutions are   the standard bubbles
$$
U_{\delta,\xi}(x,t):=\frac{\delta^{\frac{n-2}{2}}}{\left((\delta+t)^{2}+|x-\xi|^{2}\right)^{\frac{n-2}{2}}}, \ (x,t)\in\mathbb{R}^{n-1}\times\mathbb{R}_{+},\ \delta>0,\ \xi\in\mathbb{R}^{n-1},$$
which are all the solutions to the limit problem
\begin{equation}
\left\{ \begin{array}{ll}
-\Delta U=0 & \text{ on }\mathbb{R}^{n-1}\times\mathbb{R}_{+}\\
 \partial_\nu U=(n-2)U^{\frac{n}{n-2}} & \text{ on }\mathbb{R}^{n-1}\times\left\{ t=0\right\} .
\end{array}\right.\label{eq:PL}
\end{equation}
We set $U_{\delta}(x,t) :=U_{\delta,0}(x,t).$

We also need to introduce the  linear  problem  
\begin{equation}
\left\{ \begin{array}{ll}
-\Delta V=0 & \text{ on }\mathbb{R}^{n-1}\times\mathbb{R}_{+}\\
\partial_\nu V=nU_{1}^{\frac{2}{n-2}}V & \text{ on }\mathbb{R}^{n-1}\times\left\{ t=0\right\} .
\end{array}\right.\label{eq:linearizzato}
\end{equation}
In \cite{A3} it has been proved  that  the $n-$dimensional space of  solutions of (\ref{eq:linearizzato}) is generated by the functions
\[
V_{i}=\frac{\partial U_{1}}{\partial x_{i}}=(2-n)\frac{x_{i}}{\left((1+t)^{2}+|x|^{2}\right)^{\frac{n}{2}}}\text{ for }i=i,\dots n-1
\]
\[
V_{0}=\left.\frac{\partial U_{\delta}}{\partial\delta}\right|_{\delta=1}=\frac{n-2}{2}\left(\frac{1}{(1+t)^{2}+|x|^{2}}\right)^{\frac{n}{2}}\left[t^{2}+|x|^{2}-1\right]
\]

Next, given a point $q\in\partial M,$ we introduce the Fermi coordinates $\psi_{q}^{\partial}:B^{n-1}(0,R)\times[0,R)\rightarrow M$,
where $B^{n-1}(0,R)$ is the $n-1$ dimensional unitary ball in $\mathbb{R}^{n-1}$ and
we   read   the bubble on the manifold as the function
\[
W_{\delta,q}(\xi)=U_{\delta}\left((\psi_{q}^{\partial})^{-1}\xi\right)\chi\left((\psi_{q}^{\partial})^{-1}\xi\right),
\]
and the functions $V_i$'s on the manifold as the functions
\[
Z_{\delta,q}^{i}(\xi)=\frac{1}{\delta^{\frac{n-2}{2}}}V_{i}\left(\frac{1}{\delta}(\psi_{q}^{\partial})^{-1}\xi\right)\chi\left((\psi_{q}^{\partial})^{-1}\xi\right)\ \ i=0,\dots n-1.
\]
where 
$\chi(x,t)=\tilde{\chi}(|x|)\tilde{\chi}(t)$, being $\tilde{\chi}$
a smooth cut off function, $\tilde{\chi}(s)\equiv1$ for $0\le s<R/2$
and $\tilde{\chi}(s)\equiv0$ for $s\ge R$. 
Then, it is necessary to split the Hilbert space
$H$  into the sum of the orthogonal spaces
$$
K_{\delta,q}=\text{Span }\left\langle Z_{\delta,q}^{0},\dots,Z_{\delta,q}^{n-1}\right\rangle 
$$ and
$$
K_{\delta,q}^{\bot}=\left\{ \varphi\in H^{1}(M)\ |\ \left\langle \left\langle \varphi,Z_{\delta,q}^{i}\right\rangle \right\rangle _{H}=0\ \text{ for all }i=0,\dots,n-1\right\}. 
$$
Finally, we can look for a solution to problem \eqref{pre} as
\[
u_\varepsilon(x)=W_{\delta,q}(x)+\phi (x)
\]
where the blow-up point $q\in\partial M,$ the blowing-up rate $\delta$ satisfies
\begin{equation}\label{delta}
\delta:=d\varepsilon\ \hbox{for some}\ d>0
\end{equation}
and the remainder term $\phi $ belongs to the infinite dimensional space $K_{\delta,q}^{\bot}\cap\mathcal{H} $ of codimension $n$.  
We are led to solve the system
\begin{equation}
\Pi_{\delta,q}^{\bot}\left\{ W_{\delta,q}(x)+\phi(x)-i^{*}\left(f_{\varepsilon}(W_{\delta,q}(x)+\phi(x))\right)\right\} =0\label{eq:rid1}
\end{equation}
\begin{equation}
\Pi_{\delta,q}\left\{ W_{\delta,q}(x)+\phi(x)-i^{*}\left(f_{\varepsilon}(W_{\delta,q}(x)+\phi(x))\right)\right\} =0\label{eq:rid2}
\end{equation}
being $\Pi_{\delta,q}^{\bot}$ and $\Pi_{\delta,q}$ the projection
respectively on $K_{\delta,q}^{\bot}$ and $K_{\delta,q}$.

\section{The finite dimensional reduction\label{sec:red} }\label{due}

In this section we perform the finite dimensional reduction. We rewrite
the auxiliary equation (\ref{eq:rid1}) in the equivalent form 
\begin{equation}
L(\phi)=N(\phi)+R\label{eq:rid1equiv}
\end{equation}
where $L=L_{\delta,q}:K_{\delta,q}^{\bot}\cap\mathcal{H}\rightarrow K_{\delta,q}^{\bot}\cap\mathcal{H}$
is the linear operator 
\[
L(\phi)=\Pi_{\delta,q}^{\bot}\left\{ \phi(x)-i^{*}\left(f_{\varepsilon}'(W_{\delta,q})[\phi]\right)\right\} ,
\]
$N(\phi)$ is the nonlinear term 
\begin{equation}
N(\phi)=\Pi_{\delta,q}^{\bot}\left\{ i^{*}\left(f_{\varepsilon}(W_{\delta,q}(x)+\phi(x))\right)-i^{*}\left(f_{\varepsilon}(W_{\delta,q}(x)\right)-i^{*}\left(f_{\varepsilon}'(W_{\delta,q})[\phi]\right)\right\} \label{eq:Ndef}
\end{equation}
and the error term $R$ is defined by
\begin{equation}
R=\Pi_{\delta,q}^{\bot}\left\{ i^{*}\left(f_{\varepsilon}(W_{\delta,q}(x)\right)-W_{\delta,q}(x)\right\} .\label{eq:Rdef}
\end{equation}

\subsection{The invertibility of the linear operator $L$}
\begin{lem}
\label{lem:L}  For $a,b\in\mathbb{R}$,
$0<a<b$ there exists a positive constant $C_{0}=C_{0}(a,b)$ such
that, for $\varepsilon$ small, for any $q\in\partial M$, for any
$       d    \in[a,b]$ and for any $\phi\in K_{\delta,q}^{\bot}\cap\mathcal{H}$
there holds
\[
\|L_{\delta,q}(\phi)\|_{\mathcal{H}}\ge C_{0}\|\phi\|_{\mathcal{H}}.
\]
\end{lem}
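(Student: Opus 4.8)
The plan is to argue by contradiction, following the standard scheme for Lyapunov--Schmidt reductions. Suppose the claimed lower bound fails. Then there exist sequences $\varepsilon_k\to0^+$, points $q_k\in\partial M$, parameters $d_k\in[a,b]$ --- so that $\delta_k:=d_k\varepsilon_k\to0$ by \eqref{delta} --- and functions $\phi_k\in K^{\bot}_{\delta_k,q_k}\cap\mathcal{H}$ with $\|\phi_k\|_{\mathcal{H}}=1$ and $\|L_{\delta_k,q_k}(\phi_k)\|_{\mathcal{H}}\to0$. Writing $h_k:=L_{\delta_k,q_k}(\phi_k)$ and using $\Pi^{\bot}_{\delta_k,q_k}\phi_k=\phi_k$, I would record the identity
\[
\phi_k-i^*\!\left(f'_{\varepsilon_k}(W_{\delta_k,q_k})[\phi_k]\right)=h_k+\sum_{i=0}^{n-1}c^k_i\,Z^i_{\delta_k,q_k},
\]
where the last sum is the $K_{\delta_k,q_k}$-component. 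First I would show that the coefficients $c^k_i$ are negligible: testing this identity against each $Z^j_{\delta_k,q_k}$, using that the $Z^j$ are almost orthonormal after normalization and uniformly bounded in $H$, and controlling the term $\langle\langle i^*(f'_{\varepsilon_k}(W_{\delta_k,q_k})[\phi_k]),Z^j_{\delta_k,q_k}\rangle\rangle_H$ through the continuity of $i^*$ and H\"older's inequality, yields $\sum_i|c^k_i|\to0$, hence $\|\sum_i c^k_i Z^i_{\delta_k,q_k}\|_{\mathcal{H}}\to0$.

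The heart of the argument is a blow-up analysis. Using the Fermi coordinates $\psi^{\partial}_{q_k}$ and dilating by $\delta_k$, I would set $\hat\phi_k(y):=\delta_k^{(n-2)/2}\phi_k(\psi^{\partial}_{q_k}(\delta_k y))\chi(\delta_k y)$ on $\mathbb{R}^{n-1}\times\mathbb{R}_+$. The rescaled metric converges in $C^1_{\mathrm{loc}}$ to the Euclidean one and $W_{\delta_k,q_k}$ rescales to the bubble $U_1$, so the family $\hat\phi_k$ is bounded in $D^{1,2}(\mathbb{R}^{n-1}\times\mathbb{R}_+)$ and, up to a subsequence, converges weakly to some $V$. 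Passing to the limit in the rescaled equation --- where the $i^*$ term produces, in the limit, the boundary term $nU_1^{2/(n-2)}V$ --- shows that $V$ solves the linearized problem \eqref{eq:linearizzato}. Moreover the orthogonality conditions $\langle\langle\phi_k,Z^i_{\delta_k,q_k}\rangle\rangle_H=0$ pass to the limit and force $V$ to be orthogonal, in the appropriate weighted sense, to every $V_i$, $i=0,\dots,n-1$. Since by \cite{A3} the solution space of \eqref{eq:linearizzato} is exactly $\mathrm{Span}\langle V_0,\dots,V_{n-1}\rangle$, this forces $V\equiv0$.

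Finally I would upgrade weak to strong convergence to reach a contradiction. Since $V\equiv0$, the rescaled functions carry no mass in the limit near $q_k$; combined with the smallness of $h_k$ and of the $K$-component, and with the continuity of $i^*$, one shows $\|i^*(f'_{\varepsilon_k}(W_{\delta_k,q_k})[\phi_k])\|_{\mathcal{H}}\to0$, whence $\|\phi_k\|_{\mathcal{H}}\to0$, contradicting $\|\phi_k\|_{\mathcal{H}}=1$.

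The main obstacle is the supercritical regime, where $\mathcal{H}$ is a genuine Banach space carrying the extra $L^{s_\varepsilon}(\partial M)$ norm and the exponent in $f'_{\varepsilon_k}$ lies above the critical one: all the continuity estimates for $i^*$ must be made uniform in $\varepsilon_k$ through the Nittka-type inequality \eqref{eq:nittka}, and the weak-to-strong passage requires uniform integrability of the nonlinear term on $\partial M$. Ensuring that every constant is uniform in $q_k\in\partial M$ (using compactness of $\partial M$ and smoothness of the Fermi charts) and in $d_k\in[a,b]$ is the other point demanding care.
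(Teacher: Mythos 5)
Your overall scheme coincides with the paper's proof: contradiction hypothesis with $\|\phi_k\|_{\mathcal H}=1$, rescaling in Fermi coordinates at scale $\delta_k$, boundedness in $D^{1,2}(\mathbb R^n_+)$ and a weak limit solving the linearized problem \eqref{eq:linearizzato}, the nondegeneracy result of \cite{A3} plus the limiting orthogonality forcing the limit to vanish, and finally the upgrade to strong convergence (including the extra $L^{s_\varepsilon}(\partial M)$ norm in the supercritical case, via the Nittka estimate \eqref{eq:nittka}) to contradict the normalization. This is exactly the architecture of the paper's argument.

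There is, however, one step that would fail as you state it: you claim that $\sum_i|c_i^k|\to 0$ follows, \emph{before} the blow-up analysis, from testing the identity against $Z^j_{\delta_k,q_k}$ and ``the continuity of $i^*$ and H\"older's inequality.'' H\"older alone cannot produce smallness here. Indeed $f'_{\varepsilon_k}(W_{\delta_k,q_k})\approx n\,W_{\delta_k,q_k}^{2/(n-2)}$ is bounded in $L^{n-1}(\partial M)$, while $\phi_k$ and $Z^j_{\delta_k,q_k}$ are bounded in $L^{2(n-1)/(n-2)}(\partial M)$; since $\frac{1}{n-1}+\frac{n-2}{2(n-1)}+\frac{n-2}{2(n-1)}=1$, the exponents are exactly conjugate and all three norms are scale-invariant and concentrate at the same point $q_k$ and the same scale $\delta_k$, so H\"older yields only $\int_{\partial M}f'_{\varepsilon_k}(W_{\delta_k,q_k})[\phi_k]\,Z^j_{\delta_k,q_k}\,d\sigma=O(1)$, not $o(1)$. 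The paper obtains the needed smallness by a different mechanism, and in the opposite order: it \emph{first} passes the orthogonality $\phi_k\in K^{\bot}_{\delta_k,q_k}$ to the blow-up limit, obtaining \eqref{eq:L3}, i.e.\ $n\int_{\mathbb R^{n-1}}U_1^{2/(n-2)}V_i\tilde\phi\,dz=o(1)$; then it pairs the identity \eqref{eq:L4} with $\phi_k$ itself to deduce that $\bigl(f'_{\varepsilon_k}(W_{\delta_k,q_k})\bigr)^{1/2}\phi_k$ is bounded, hence weakly convergent, in $L^2(\partial M)$, which allows it to identify the limit of the integral above as precisely the quantity in \eqref{eq:L3}, which vanishes. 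Only then does $c^k_i\to0$ follow from the almost-orthonormality $\langle\langle Z^i,Z^j\rangle\rangle_H=C\delta_{ij}+o(1)$. So you must run the rescaling and the limiting orthogonality before estimating the coefficients; with that reordering (and the $L^2(\partial M)$ weak-compactness device you omitted), your outline becomes the paper's proof.
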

\begin{proof}
We argue by contradiction. We suppose that there exist two sequence
of real numbers $\varepsilon_{m}\rightarrow0,       d    _{m}\in[a,b]$
a sequence of points $q_{m}\in\partial M$ and a sequence of functions
$\phi_{\varepsilon_{m}       d    _{m},q_{m}}\in K_{\varepsilon_{m}       d    _{m},q_{m}}^{\bot}\cap\mathcal{H}$
such that 
\[
\|\phi_{\varepsilon_{m}       d    _{m},q_{m}}\|_{\mathcal{H}}=1\text{ and }\|L_{\varepsilon_{m}       d    _{m},q_{m}}(\phi_{\varepsilon_{m}       d    _{m},q_{m}})\|_{\mathcal{H}}\rightarrow0\text{ as }m\rightarrow+\infty.
\]
For the sake of simplicity, we set $\delta_{m}=\varepsilon_{m}       d    _{m}$
and we define
\[
\tilde{\phi}_{m}:=\delta_{m}^{\frac{n-2}{2}}\phi_{\delta_{m},q_{m}}(\psi_{q_{m}}^{\partial}(\delta_{m}\eta))\chi(\delta_{m}\eta)\text{ for }\eta=(z,t)\in\mathbb{R}_{+}^{n},\text{ with }z\in\mathbb{R}^{n-1}\text{ and }t\ge0
\]
Since $\|\phi_{\varepsilon_{m}       d    _{m},q_{m}}\|_{H}\le1$, by change
of variables we easily get that $\left\{ \tilde{\phi}_{m}\right\} _{m}$
is bounded in $D^{1,2}(\mathbb{R}_{+}^{n})$ (but not in $H^{1}(\mathbb{R}_{+}^{n})$).
Thus there exists $\tilde{\phi}\in D^{1,2}(\mathbb{R}_{+}^{n})$ such
that $\tilde{\phi}_{m}\rightharpoonup\tilde{\phi}$ weakly in $D^{1,2}(\mathbb{R}_{+}^{n})$,
in $L^{\frac{2n}{n-2}}(\mathbb{R}_{+}^{n})$, strongly in $L_{\text{loc}}^{\frac{2(n-1)}{n-2}}(\partial\mathbb{R}_{+}^{n})$
and almost everywhere.

Since $\phi_{\delta_{m},q_{m}}\in K_{\delta_{m},q_{m}}^{\bot}$, and
taking in account (\ref{eq:linearizzato}) we get, for $i=0,\dots,n-1$,
\begin{equation}
o(1)=\int_{\mathbb{R}_{+}^{n}}\nabla\tilde{\phi}\nabla V_{i}dzdt=n\int_{\mathbb{R}^{n-1}}U_{1}^{\frac{2}{n-2}}(z,0)V_{i}(z,0)\tilde{\phi}(z,0)dz.\label{eq:L3}
\end{equation}
 Indeed, by change of variables we have
\begin{align*}
0= & \left\langle \left\langle \phi_{\delta_{m},q_{m}},Z_{\delta_{m},q_{m}}^{i}\right\rangle \right\rangle _{H}=\int_{M}\left(\nabla_{g}\phi_{\delta_{m},q_{m}}\nabla_{g}Z_{\delta_{m},q_{m}}^{i}+a(x)\phi_{\delta_{m},q_{m}}Z_{\delta_{m},q_{m}}^{i}\right)d\mu_{g}\\
 & +\frac{n-2}{2}\int_{\partial M}b(x)\phi_{\delta_{m},q_{m}}Z_{\delta_{m},q_{m}}^{i}d\sigma\\
= & \int_{\mathbb{R}_{+}^{n}}|g_{q_{m}}(\delta\eta)|^{\frac{1}{2}}\delta^{\frac{n-2}{2}}g_{q_{m}}^{\alpha\beta}(\delta\eta)\frac{\partial}{\partial\eta_{\alpha}}V_{i}(\eta)\chi(\delta\eta)\frac{\partial}{\partial\eta_{\alpha}}\phi_{\delta_{m},q_{m}}(\psi_{q_{m}}^{\partial}(\delta_{m}\eta))d\eta\\
 & +\int_{\mathbb{R}_{+}^{n}}|g_{q_{m}}(\delta\eta)|^{\frac{1}{2}}\delta^{\frac{n+2}{2}}a(\psi_{q_{m}}^{\partial}(\delta\eta))V_{i}(\eta)\phi_{\delta_{m},q_{m}}(\psi_{q_{m}}^{\partial}(\delta_{m}\eta))d\eta\\
 & +\int_{\partial\mathbb{R}_{+}^{n}}|g_{q_{m}}(\delta z,0)|^{\frac{1}{2}}\delta^{\frac{n}{2}}b(\psi_{q_{m}}^{\partial}(\delta\eta))\phi_{\delta_{m},q_{m}}(\psi_{q_{m}}^{\partial}(\delta_{m}z,0))V_{i}(\delta_{m}z,0)dz\\
= & \int_{\mathbb{R}_{+}^{n}}\nabla V_{i}(\eta)\nabla\tilde{\phi}_{m}(\eta)+\delta^{2}a(q_{m})V_{i}(\eta)\tilde{\phi}_{m}(\eta)d\eta\\
 & +\delta\int_{\partial\mathbb{R}_{+}^{n}}b(q_{m})V_{i}(z,0)\tilde{\phi}_{m}(z,0)d\eta+O(\delta)=\int_{\mathbb{R}_{+}^{n}}\nabla V_{i}(\eta)\nabla\tilde{\phi}_{m}(\eta)+O(\delta)\\
= & \int_{\mathbb{R}_{+}^{n}}\nabla V_{i}(\eta)\nabla\tilde{\phi}(\eta)+o(1),
\end{align*}
By definition of $L_{\delta_{m},q_{m}}$ we have 
\begin{equation}
\phi_{\delta_{m},q_{m}}-i^{*}\left(f_{\varepsilon}'(W_{\delta_{m},q_{m}})[\phi_{\delta_{m},q_{m}}]\right)-L_{\delta_{m},q_{m}}\left(\phi_{\delta_{m},q_{m}}\right)=\sum_{i=0}^{n-1}c_{m}^{i}Z_{\delta_{m},q_{m}}^{i}.\label{eq:L4}
\end{equation}
 We want to prove that, for all $i=0,\dots,n-1$, $c_{m}^{i}\rightarrow0$
while $m\rightarrow\infty.$ Multiplying equation (\ref{eq:L4}) by
$Z_{\delta_{m},q_{m}}^{j}$ we obtain, by definition of $i^{*}$,
\begin{align*}
\sum_{i=0}^{n-1}c_{m}^{i}\left\langle \left\langle Z_{\delta_{m},q_{m}}^{i},Z_{\delta_{m},q_{m}}^{j}\right\rangle \right\rangle _{H}= & \left\langle \left\langle i^{*}\left(f_{\varepsilon_{m}}'(W_{\delta_{m},q_{m}})[\phi_{\delta_{m},q_{m}}]\right),Z_{\delta_{m},q_{m}}^{j}\right\rangle \right\rangle _{H}\\
= & \int_{\partial M}f_{\varepsilon_{m}}'(W_{\delta_{m},q_{m}})[\phi_{\delta_{m},q_{m}}]Z_{\delta_{m},q_{m}}^{j}d\sigma
\end{align*}
Moreover, by multiplying (\ref{eq:L4}) by $\phi_{\delta_{m},q_{m}}$
we obtain that
\[
\|\phi_{\delta_{m},q_{m}}\|_{H}-\int_{\partial M}f_{\varepsilon_{m}}'(W_{\delta_{m},q_{m}})\phi_{\delta_{m},q_{m}}^{2}d\sigma\rightarrow0,
\]
thus $\left(f_{\varepsilon_{m}}'(W_{\delta_{m},q_{m}})\right)^{1/2}\phi_{\delta_{m},q_{m}}$
is bounded and weakly convergent in $L^{2}(\partial M)$. With this
consideration easily we get 
\begin{multline*}
\int_{\partial M}f_{\varepsilon_{m}}'(W_{\delta_{m},q_{m}})[\phi_{\delta_{m},q_{m}}]Z_{\delta_{m},q_{m}}^{j}d\sigma\\
=\int_{\partial M}\left(f_{\varepsilon_{m}}'(W_{\delta_{m},q_{m}})\right)^{1/2}\phi_{\delta_{m},q_{m}}\left(f_{\varepsilon_{m}}'(W_{\delta_{m},q_{m}})\right)^{1/2}Z_{\delta_{m},q_{m}}^{j}d\sigma\\
=n\int_{\mathbb{R}^{n-1}}U_{1}^{\frac{2}{n-2}}(z,0)\tilde{\phi}(z,0)V_{i}(z,0)dz+o(1)=o(1),
\end{multline*}
once we take in account (\ref{eq:L3}) . 

Now, it is easy to prove that
\[
\left\langle \left\langle Z_{\delta_{m},q_{m}}^{i},Z_{\delta_{m},q_{m}}^{j}\right\rangle \right\rangle _{H}=C\delta_{ij}+o(1),
\]
hence we can conclude that $c_{m}^{i}\rightarrow0$ while $m\rightarrow\infty$
for each $i=0,\dots,n-1$. This, combined with (\ref{eq:L4}) and
since $\|L_{\varepsilon_{m}       d    _{m},q_{m}}(\phi_{\varepsilon_{m}       d    _{m},q_{m}})\|_{\mathcal{H}}\rightarrow0$
gives us that
\begin{equation}
\left\Vert \phi_{\delta_{m},q_{m}}-i^{*}\left(f_{\varepsilon}'(W_{\delta_{m},q_{m}})[\phi_{\delta_{m},q_{m}}]\right)\right\Vert _{\mathcal{H}}=\sum_{i=0}^{n-1}c_{m}^{i}\|Z^{i}\|_{\mathcal{H}}+o(1)=o(1)\label{eq:L5}
\end{equation}
Now, choose a smooth function $\varphi\in C_{0}^{\infty}(\mathbb{R}_{+}^{n})$
and define 
\[
\varphi_{m}(x)=\frac{1}{\delta_{m}^{\frac{n-2}{2}}}\varphi\left(\frac{1}{\delta_{m}}\left(\psi_{q_{m}}^{\partial}\right)^{-1}(x)\right)\chi\left(\left(\psi_{q_{m}}^{\partial}\right)^{-1}(x)\right)\text{ for }x\in M.
\]
We have that $\|\varphi_{m}\|_{H}$ is bounded and, by (\ref{eq:L5}),
that 
\begin{align*}
\left\langle \left\langle \phi_{\delta_{m},q_{m}},\varphi_{m}\right\rangle \right\rangle _{H} 
=& \int_{\partial M}f_{\varepsilon_{m}}'(W_{\delta_{m},q_{m}})
[\phi_{\delta_{m},q_{m}}]\varphi_{m}d\sigma\\
&+\left\langle \left\langle \phi_{\delta_{m},q_{m}}
-i^{*}\left(f_{\varepsilon_{m}}'(W_{\delta_{m},q_{m}})
[\phi_{\delta_{m},q_{m}}]\right),\varphi_{m}\right\rangle \right\rangle _{H}\\
 =& \int_{\partial M}f_{\varepsilon_{m}}'(W_{\delta_{m},q_{m}})
 [\phi_{\delta_{m},q_{m}}]\varphi_{m}d\sigma+o(1)\\
=& (n\pm\varepsilon_{m}(n-2))\int_{\mathbb{R}^{n-1}}
 \frac{1}{\delta_{m}^{\pm\varepsilon\frac{n}{n-2}}}
 U_{1}^{\frac{2}{n-2}\pm\varepsilon_{m}}(z,0)\tilde{\phi}_{m}(z,0)\varphi dz+o(1)\\
=& n\int_{\mathbb{R}^{n-1}}U_{1}^{\frac{2}{n-2}}(z,0)\tilde{\phi}(z,0)\varphi(z,0)dz+o(1),
\end{align*}
by the strong $L_{\text{loc}}^{\frac{2(n-1)}{n-2}}(\partial\mathbb{R}_{+}^{n})$
convergence of $\tilde{\phi}_{m}$. On the other hand 
\[
\left\langle \left\langle \phi_{\delta_{m},q_{m}},\varphi_{m}\right\rangle \right\rangle _{H}=\int_{\mathbb{R}_{+}^{n}}\nabla\tilde{\phi}\nabla\varphi d\eta+o(1),
\]
so $\tilde{\phi}$ is a weak solution of (\ref{eq:PL}) and we conclude
that
\[
\tilde{\phi}\in\text{Span}\left\{ V_{0},V_{1},\dots V_{n}\right\} .
\]
This, combined with (\ref{eq:L3}) gives that $\tilde{\phi}=0$. Proceeding
as before we have
\begin{align*}
\left\langle \left\langle \phi_{\delta_{m},q_{m}},\phi_{\delta_{m},q_{m}}\right\rangle \right\rangle _{H}= & \int_{\partial M}f_{\varepsilon_{m}}'(W_{\delta_{m},q_{m}})[\phi_{\delta_{m},q_{m}}]\phi_{\delta_{m},q_{m}}d\sigma+o(1)\\
=(n\pm\varepsilon_{m}(n-2)) & \int_{\mathbb{R}^{n-1}}\frac{1}{\delta_{m}^{\pm\varepsilon\frac{n}{n-2}}}U_{1}^{\frac{2}{n-2}\pm\varepsilon_{m}}(z,0)\tilde{\phi}_{m}^{2}(z,0)\varphi dz+o(1)=o(1)
\end{align*}
In a similar way, by (\ref{eq:L5}) we have 
\[
\left|\phi_{\delta_{m},q_{m}}\right|_{L^{s_{\varepsilon}}}=\left|i^{*}\left(f_{\varepsilon}'(W_{\delta_{m},q_{m}})[\phi_{\delta_{m},q_{m}}]\right)\right|_{L^{s_{\varepsilon}}}+o(1)=o(1)
\]
which gives $\left\Vert \phi_{\delta_{m},q_{m}}\right\Vert _{\mathcal{H}}\rightarrow0$
that is a contradiction.\end{proof}

\subsection{The estimate of the error term $R$}

\begin{lem}
\label{lem:R}  For $a,b\in\mathbb{R}$,
$0<a<b$ there exists a positive constant $C_{1}=C_{1}(a,b)$ such
that, for $\varepsilon$ small, for any $q\in\partial M$ and for
any $       d    \in[a,b]$ there holds
\[
\|R_{\varepsilon,\delta,q}\|_{\mathcal{H}}\le C_{1}\varepsilon\left|\ln\varepsilon\right|
\]
\end{lem}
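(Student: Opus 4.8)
The plan is to exploit the continuity of $i^{*}$ together with the weak characterization coming from \eqref{eq:istellasopra}. Since $\Pi_{\delta,q}^{\bot}$ has operator norm at most one, from \eqref{eq:Rdef} I first reduce to
\[
\|R\|_{\mathcal{H}}\le\left\|i^{*}\left(f_{\varepsilon}(W_{\delta,q})\right)-W_{\delta,q}\right\|_{\mathcal{H}}.
\]
I would then split the $\mathcal{H}$-norm into its $H$-component and its $L^{s_{\varepsilon}}(\partial M)$-component. For the $H$-part I use that, for every $\varphi\in H$, the definition of $i^{*}$ and an integration by parts with respect to $\langle\langle\cdot,\cdot\rangle\rangle_{H}$ give
\[
\left\langle\left\langle i^{*}(f_{\varepsilon}(W_{\delta,q}))-W_{\delta,q},\varphi\right\rangle\right\rangle_{H}=\int_{\partial M}\left(f_{\varepsilon}(W_{\delta,q})-\mathcal{B}W_{\delta,q}\right)\varphi\,d\sigma-\int_{M}\mathcal{L}W_{\delta,q}\,\varphi\,d\mu_{g},
\]
so that, taking the supremum over $\|\varphi\|_{H}=1$ and using the trace and Sobolev embeddings, the $H$-norm is controlled by $\|f_{\varepsilon}(W_{\delta,q})-\mathcal{B}W_{\delta,q}\|_{L^{\frac{2(n-1)}{n}}(\partial M)}+\|\mathcal{L}W_{\delta,q}\|_{L^{\frac{2n}{n+2}}(M)}$. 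The $L^{s_{\varepsilon}}(\partial M)$-part, relevant only in the supercritical case, is handled in the same spirit through the Nittka-type estimate \eqref{eq:nittka}.

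Next I would pass to Fermi coordinates $\psi_{q}^{\partial}$ centered at the blow-up point, rescale by $\eta=\delta^{-1}(\psi_{q}^{\partial})^{-1}$, and expand the metric $g_{q}$ in powers of $\delta|\eta|$, the first-order term carrying the second fundamental form and the mean curvature $H_{g}(q)$. The crucial cancellation is that $U_{\delta}$ solves the flat limit problem \eqref{eq:PL}: in the interior the flat Laplacian annihilates $U_{\delta}$, and on the boundary the flat Neumann datum $(n-2)U_{\delta}^{\frac{n}{n-2}}$ matches the critical part of $f_{\varepsilon}$. After this cancellation the boundary error $f_{\varepsilon}(W_{\delta,q})-\mathcal{B}W_{\delta,q}$ consists of: (a) the exponent perturbation $(n-2)\bigl(W_{\delta,q}^{\frac{n}{n-2}\pm\varepsilon}-W_{\delta,q}^{\frac{n}{n-2}}\bigr)$; (b) the curvature contribution $\tfrac{n-2}{2}(b-H_{g})W_{\delta,q}$ together with the metric correction of the normal derivative; while the interior error $\mathcal{L}W_{\delta,q}$ consists of $(\Delta-\Delta_{g})U_{\delta}$, the potential term $aW_{\delta,q}$, and terms supported where the cut-off $\chi$ varies.

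The decisive estimate is (a): writing $W_{\delta,q}^{\pm\varepsilon}-1=e^{\pm\varepsilon\ln W_{\delta,q}}-1=\pm\varepsilon\ln W_{\delta,q}+O(\varepsilon^{2}\ln^{2}W_{\delta,q})$ and computing the resulting $L^{\frac{2(n-1)}{n}}(\partial M)$-norm in the rescaled variables, the factor $\ln W_{\delta,q}$ produces a logarithmic divergence $\sim\ln(1/\delta)=\ln(1/(d\varepsilon))$; combined with the prefactor $\varepsilon$ and \eqref{delta} this yields exactly the bound $O(\varepsilon|\ln\varepsilon|)$. The geometric and potential contributions in (b) and in the interior are $O(\delta)=O(\varepsilon)$, and here the hypothesis $n\ge7$ is what guarantees that the remaining curvature and metric integrals converge and stay of this lower order rather than contaminating the leading term; the cut-off contributions are exponentially small in $1/\delta$. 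Summing everything gives $\|R\|_{\mathcal{H}}\le C_{1}\varepsilon|\ln\varepsilon|$. I expect the main obstacle to be the careful bookkeeping of the logarithm in the exponent-change term—in particular tracking how the rescaling turns $\ln W_{\delta,q}$ into $\ln(1/\delta)$—together with establishing the $L^{s_{\varepsilon}}(\partial M)$ bound in the supercritical case through \eqref{eq:nittka}, for which one must verify that the perturbed exponents stay within the admissible range \eqref{eq:nittka1}.
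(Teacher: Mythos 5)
Your proposal is correct and follows essentially the same route as the paper's proof: a duality/integration-by-parts estimate reducing $\|i^{*}(f_{\varepsilon}(W_{\delta,q}))-W_{\delta,q}\|_{\mathcal H}$ to the $L^{\frac{2(n-1)}{n}}(\partial M)$-norm of the boundary discrepancy and the $L^{\frac{2n}{n+2}}(M)$-norm of the interior one (the paper phrases this by introducing $\Gamma=i^{*}(f_{0}(W_{\delta,q}))$ and testing against $\Gamma-W_{\delta,q}$), the same Taylor expansions of $U^{\pm\varepsilon}$ and $\delta^{\mp\varepsilon\frac{n-2}{2}}$ producing the $\varepsilon|\ln\delta|=O(\varepsilon|\ln\varepsilon|)$ leading term after the choice $\delta=d\varepsilon$, and the same Nittka-type regularity estimate with the perturbed exponents of \eqref{eq:nittka1} for the $L^{s_{\varepsilon}}(\partial M)$ component in the supercritical case. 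Your remark that $n\ge7$ is what makes the geometric and potential contributions integrable at the stated order (e.g.\ $|W_{\delta,q}|_{L^{\frac{2n}{n+2}}(M)}=O(\delta^{2})$) is consistent with the paper's implicit use of that hypothesis.
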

\begin{proof}
We estimate
\begin{align*}
\left\Vert i^{*}\left(f_{\varepsilon}(W_{\delta,q}(x)\right)-W_{\delta,q}(x)\right\Vert _{H} & \le\left\Vert i^{*}\left(f_{\varepsilon}(W_{\delta,q}(x)\right)-i^{*}\left(f_{0}(W_{\delta,q}(x)\right)\right\Vert _{H}\\
 & +\left\Vert i^{*}\left(f_{0}(W_{\delta,q}(x)\right)-W_{\delta,q}(x)\right\Vert _{H}.
\end{align*}
By definiton of $i^{*}$ there exists $\Gamma$ which solves the equation
\begin{equation}
\left\{ \begin{array}{ll}
-\Delta_{g}\Gamma+a(x)\Gamma=0 & \text{ on }M\\
\frac{\partial}{\partial\nu}\Gamma+\frac{n-2}{2}b(x)\Gamma=f_{0}(W_{\delta,q}) & \text{ on }\partial M
\end{array}\right..\label{eq:gamma}
\end{equation}
so, by (\ref{eq:gamma}), we have
\begin{align*}
\Vert i^{*}\left(f_{0}(W_{\delta,q}(x)\right)&-W_{\delta,q}(x)\Vert _{H}=  \|\Gamma(x)-W_{\delta,q}(x)\|_{H}^{2}\\
= & \int_{M}\left[-\Delta_{g}(\Gamma-W_{\delta,q})+a(\Gamma-W_{\delta,q})\right](\Gamma-W_{\delta,q})d\mu_{g}\\
 & +\int_{\partial M}\left[\frac{\partial}{\partial\nu}(\Gamma-W_{\delta,q})+\frac{(n-2)}{2}b(x)(\Gamma-W_{\delta,q})\right](\Gamma-W_{\delta,q})d\mu_{g}\\
= & \int_{M}\left[\Delta_{g}W_{\delta,q}-aW_{\delta,q}\right](\Gamma-W_{\delta,q})d\mu_{g}\\
 & +\int_{\partial M}\left[f_{0}(W_{\delta,q})-\frac{\partial}{\partial\nu}W_{\delta,q}\right](\Gamma-W_{\delta,q})d\mu_{g}\\
 & -\frac{(n-2)}{2}\int_{\partial M}b(x)W_{\delta,q}(\Gamma-W_{\delta,q})d\mu_{g}:=I_{1}+I_{2}+I_{3}
\end{align*}
We obtain
\begin{equation}
I_{1}=\left\Vert \Gamma-W_{\delta,q}\right\Vert _{H}O(\delta).\label{eq:I1}
\end{equation}
Infact

\[
I_{1}\le\left|\Delta_{g}W_{\delta,q}-aW_{\delta,q}\right|_{L^{\frac{2n}{n+2}}(M)}\left|\Gamma-W_{\delta,q}\right|_{L^{\frac{2n}{n-2}}(M)}\le\left|\Delta_{g}W_{\delta,q}-aW_{\delta,q}\right|_{L^{\frac{2n}{n+2}}(M)}\left\Vert \Gamma-W_{\delta,q}\right\Vert _{H}.
\]
Easily we have that $\left|W_{\delta,q}\right|_{L^{\frac{2n}{n+2}}}=O(\delta^{2})$.
For the other term we have, in coordinates, 
\begin{equation}
\Delta_{g}W_{\delta,q}=\Delta[U_{\delta}\chi]+(g^{ab}-\delta_{ab})\partial_{ab}[U_{\delta}\chi]-g^{ab}\Gamma_{ab}^{k}\partial_{k}[U_{\delta}\chi],\label{eq:restoarm1}
\end{equation}
$\Gamma_{ab}^{k}$ being the Christoffel symbols. Using the expansion
of the metric $g^{ab}$ given by (\ref{eq:g1}) and (\ref{eq:g2})
we have that 
\begin{equation}
\left|(g^{ab}-\delta_{ab})\partial_{ab}[U_{\delta}\chi]\right|_{L^{\frac{2n}{n+2}}(M)}=O(\delta)\text{ and }\left|g^{ab}\Gamma_{ab}^{k}\partial_{k}[U_{\delta}\chi]\right|_{L^{\frac{2n}{n+2}}(M)}=O(\delta^{2})\label{eq:restoarm2}
\end{equation}
 Since $U_{\delta}$ is a harmonic function we deduce
\begin{equation}
\left|\Delta[U_{\delta}\chi]\right|_{L^{\frac{2n}{n+2}}(M)}=\left|U_{\delta}\Delta\chi+2\nabla U_{\delta}\nabla\chi]\right|_{L^{\frac{2n}{n+2}}(M)}=O(\delta^{2}).\label{eq:restoarm3}
\end{equation}
For the second integral $I_{2}$ we have
\begin{equation}
I_{2}=\left\Vert \Gamma-W_{\delta,q}\right\Vert _{H}O(\delta^{2}).\label{eq:I2}
\end{equation}
since
\begin{align*}
I_{2} & \le\left|f_{0}(W_{\delta,q})-\frac{\partial}{\partial\nu}W_{\delta,q}\right|_{L^{\frac{2(n-1)}{n}}(\partial M)}\left|\Gamma-W_{\delta,q}\right|_{L^{\frac{2(n-1)}{n-2}}(\partial M)}\\
 & \le C\left|f_{0}(W_{\delta,q})-\frac{\partial}{\partial\nu}W_{\delta,q}\right|_{L^{\frac{2(n-1)}{n}}(\partial M)}\left\Vert \Gamma-W_{\delta,q}\right\Vert _{H},
\end{align*}
and, using the boundary condition for (\ref{eq:PL}) we have 
\begin{multline}
\left|f_{0}(W_{\delta,q})-\frac{\partial}{\partial\nu}W_{\delta,q}\right|_{L^{\frac{2(n-1)}{n}}(\partial M)}\\
=\frac{1}{\delta^{\frac{n}{2}}}\left(\int_{\mathbb{R}^{n-1}}|g(\delta z,0)|^{\frac{1}{2}}\left[(n-2)U^{\frac{n}{n-2}}(z,0)\chi^{\frac{n}{n-2}}(\delta z,0)-\chi(\delta z,0)\frac{\partial U}{\partial t}(z,0)\right]^{\frac{2(n-1)}{n}}\delta^{n-1}dz\right)^{\frac{n}{2(n-1)}}\\
\le C\left(\int_{\mathbb{R}^{n-1}}\left[(n-2)U^{\frac{n}{n-2}}(z,0)\left[\chi^{\frac{n}{n-2}}(\delta z,0)-\chi(\delta z,0)\right]\right]^{\frac{2(n-1)}{n}}dz\right)^{\frac{n}{2(n-1)}}=O(\delta^{2}),\label{eq:restobordo}
\end{multline}
Lastly, 
\begin{equation}
I_{3}\le\left|W_{\delta,q}\right|_{L^{\frac{2(n-1)}{n}}(\partial M)}\left|\Gamma-W_{\delta,q}\right|_{L^{\frac{2(n-1)}{n-2}}(\partial M)}=\left\Vert \Gamma-W_{\delta,q}\right\Vert _{H}O(\delta).\label{eq:I3}
\end{equation}
By (\ref{eq:I1}), (\ref{eq:I2}) and (\ref{eq:I3}) we conclude that
\[
\left\Vert i^{*}\left(f_{0}(W_{\delta,q}(x)\right)-W_{\delta,q}(x)\right\Vert _{H}=\|\Gamma(x)-W_{\delta,q}(x)\|_{H}=O(\delta).
\]
To conclude the proof we estimate the term $\left\Vert i^{*}\left(f_{\varepsilon}(W_{\delta,q}(x)\right)-i^{*}\left(f_{0}(W_{\delta,q}(x)\right)\right\Vert _{H}$.
We have, by the properties of $i^{*}$, that 
\begin{multline*}
\left\Vert i^{*}\left(f_{\varepsilon}(W_{\delta,q}(x)\right)-i^{*}\left(f_{0}(W_{\delta,q}(x)\right)\right\Vert _{H}\le\left|W_{\delta,q}(x)^{\frac{n}{n-2}\pm\varepsilon}-W_{\delta,q}^{\frac{n}{n-2}}(x)\right|_{L^{\frac{2(n-1)}{n}}(\partial M)}\\
=\left\{ \int_{\mathbb{R}^{n-1}}\left[\left(\frac{1}{\delta^{\pm\varepsilon\frac{n-2}{2}}}U^{\pm\varepsilon}(z,0)-1\right)U^{\frac{n}{n-2}}(z,0)\right]^{\frac{2(n-1)}{n}}dz\right\} ^{\frac{n}{2(n-1)}}+O(\delta^{2})
\end{multline*}
To estimate the last integral, we first recall two Taylor espansions
with respect to $\varepsilon$
\begin{align}
U^{\pm\varepsilon} & =1\pm\varepsilon\ln U+\frac{1}{2}\varepsilon^{2}\ln^{2}U+o(\varepsilon^{2})\label{eq:Uallaeps}\\
\delta^{\mp\varepsilon\frac{n-2}{2}} & =1\mp\varepsilon\frac{n-2}{2}\ln\delta+\varepsilon^{2}\frac{(n-2)^{2}}{8}\ln^{2}\delta+o(\varepsilon^{2}\ln^{2}\delta)\label{eq:deltaallaeps}
\end{align}
In light of (\ref{eq:Uallaeps}) and (\ref{eq:deltaallaeps}) we have
\begin{multline}
\left\Vert i^{*}\left(f_{\varepsilon}(W_{\delta,q})\right)-i^{*}\left(f_{0}(W_{\delta,q})\right)\right\Vert _{H}\\
\le\left\{ \int_{\mathbb{R}^{n-1}}\left|\left(\mp\frac{n-2}{2}\varepsilon\ln\delta\pm\varepsilon\ln U(z,0)+O(\varepsilon^{2})+O(\varepsilon^{2}\ln\delta)\right)U^{\frac{n}{n-2}}(z,0)\right|^{\frac{2(n-1)}{n}}dz\right\} ^{\frac{n}{2(n-1)}}+O(\delta^{2})\\
=\frac{n-2}{2}\varepsilon\ln\delta\left|U(z,0)\right|_{L^{\frac{2(n-1)}{n-2}}(\mathbb{R}^{n-1})}^{\frac{n}{n-2}}+\varepsilon\left\{ \int_{\mathbb{R}^{n-1}}U^{\frac{2(n-1)}{n-2}}(z,0)\ln U(z,0)dz\right\} ^{\frac{n}{2(n-1)}}\\
+O(\varepsilon^{2})+O(\varepsilon^{2}\left|\ln\delta\right|)+O(\delta^{2})\\
=O(\varepsilon)+O(\varepsilon\left|\ln\delta\right|)+O(\delta^{2}).\label{eq:R3}
\end{multline}
Choosing $\delta=       d    \varepsilon$ concludes the proof of Lemma
\ref{lem:R} for the subcritical case.

For the supercritical case, we have to control $|R_{\varepsilon,\delta,q}|_{L^{s_{\varepsilon}}(\partial M)}$.
As in the previous case we consider 
\begin{align*}
|R_{\varepsilon,\delta,q}|_{L^{s_{\varepsilon}}(\partial M)}\le & \left|i^{*}\left(f_{\varepsilon} (W_{\delta,q}(x)\right)-i^{*}\left(f_{0}(W_{\delta,q}(x)\right)\right|_{L^{s_{\varepsilon}}(\partial M)}\\
 & +\left|i^{*}\left(f_{0}(W_{\delta,q}(x)\right)-W_{\delta,q}(x)\right|_{L^{s_{\varepsilon}}(\partial M)}.
\end{align*}
As before, set $\Gamma=i^{*}\left(f_{0}(W_{\delta,q}(x)\right)$.
Since $\Gamma$ solves (\ref{eq:gamma}), $\Gamma-W_{\delta,q}$ solves
\[
\left\{ \begin{array}{ll}
-\Delta_{g}(\Gamma-W_{\delta,q})+a(x)(\Gamma-W_{\delta,q})=-\Delta_{g}W_{\delta,q}+a(x)W_{\delta,q} & \text{ on }M\\
\frac{\partial}{\partial\nu}(\Gamma-W_{\delta,q})+\frac{n-2}{2}b(x)(\Gamma-W_{\delta,q})=f_{0}(\Gamma)+\frac{\partial}{\partial\nu}W_{\delta,q}+\frac{n-2}{2}b(x)W_{\delta,q} & \text{ on }\partial M
\end{array}\right..
\]
We choose $q$ as in (\ref{eq:nittka1}), and $r=\varepsilon$, thus,
by Theorem 3.14 in \cite{N},
 we have 
\begin{align*}
|\Gamma-W_{\delta,q}|_{L^{s_{\varepsilon}}(\partial M)}\le & |-\Delta_{g}W_{\delta,q}+a(x)W_{\delta,q}|_{L^{q+\varepsilon}(M)}\\
 & +\left|f_{0}(\Gamma)+\frac{\partial}{\partial\nu}W_{\delta,q}+\frac{n-2}{2}b(x)W_{\delta,q}\right|_{L^{\frac{(n-1)q}{n-q}+\varepsilon}(\partial M)}.
\end{align*}
We remark that 
\[
q=\frac{2n+n^{2}\left(\frac{n-2}{n-1}\right)\varepsilon}{n+2+2n\left(\frac{n-2}{n-1}\right)\varepsilon}=\frac{2n}{n+2}+O^{+}(\varepsilon)\text{ with }0<O^{+}(\varepsilon)<C\varepsilon
\]
for some positive constant $C$. By direct computation we have
\begin{align*}
|a(x)W_{\delta,q}|_{L^{q+\varepsilon}(M)} & \le C\delta^{2-O^{+}(\varepsilon)};\\
\left|b(x)W_{\delta,q}\right|_{L^{\frac{(n-1)q}{n-q}+\varepsilon}(\partial M)} & \le C\delta^{1-O^{+}(\varepsilon)}.
\end{align*}
Moreover, proceeding as in (\ref{eq:restoarm1}),(\ref{eq:restoarm2}),
(\ref{eq:restoarm3}), and as in (\ref{eq:restobordo}) we get
\begin{align*}
|\Delta_{g}W_{\delta,q}|_{L^{q+\varepsilon}(M)} & \le C\delta^{2-O^{+}(\varepsilon)};\\
\left|f_{0}(\Gamma)+\frac{\partial}{\partial\nu}W_{\delta,q}\right|_{L^{\frac{(n-1)q}{n-q}+\varepsilon}(\partial M)} & \le C\delta^{1-O^{+}(\varepsilon)}.
\end{align*}
Since $i^{*}\left(f_{\varepsilon} (W_{\delta,q})\right)$ solves
(\ref{eq:1.2}), and $i^{*}\left(f_{\varepsilon} |u|^{{n\over n-2}+\varepsilon}(W_{\delta,q})\right)$
solves (\ref{eq:1.2}), we again use Theorem 3.14 in \cite{N}. Taking
in account (\ref{eq:Uallaeps}) and (\ref{eq:deltaallaeps}) finally
we get 
\begin{multline}
\left|i^{*}\left(f_{\varepsilon}(W_{\delta,q})\right)-i^{*}\left(f_{0}(W_{\delta,q})\right)\right|_{L^{s_{\varepsilon}}(\partial M)}\le\left|f_{\varepsilon}(W_{\delta,q})-f_{0}(W_{\delta,q})\right|_{L^{\frac{2(n-1)}{n}+O^{+}(\varepsilon)}(\partial M)}\\
\le\delta^{-O^{+}(\varepsilon)}\left\{ \int_{\mathbb{R}^{n-1}}\left[\left(\frac{1}{\delta^{\varepsilon\frac{n-2}{2}}}U^{\varepsilon}(z,0)-1\right)U^{\frac{n}{n-2}}(z,0)\right]^{\frac{2(n-1)}{n}+O^{+}(\varepsilon)}dz\right\} ^{\frac{1}{\frac{2(n-1)}{n}+O^{+}(\varepsilon)}}+O(\delta^{2})\\
=\delta^{-O^{+}(\varepsilon)}\left\{ O(\varepsilon\left|\ln\delta\right|)+O(\varepsilon)\right\} +O(\delta^{2}).\label{eq:R3sup}
\end{multline}
Now, choosing $\delta=       d    \varepsilon$, we can conlcude the proof,
since 
\[
\delta^{-O^{+}(\varepsilon)}=1+O^{+}(\varepsilon)\left|\ln(\varepsilon       d    )\right|=1+O^{+}(\varepsilon\left|\ln\varepsilon\right|)=O(1).
\]
\end{proof}

\subsection{Solving equation \eqref{eq:rid1}: the remainder term $\phi$}
\begin{prop}
\label{prop:phi}  For $a,b\in\mathbb{R}$,
$0<a<b$ there exists a positive constant $C=C(a,b)$ such that, for
$\varepsilon$ small, for any $q\in\partial M$ and for any $       d    \in[a,b]$
there exists a unique $\phi_{\delta,q}$ which solves (\ref{eq:rid1})
\[
\|\phi_{\delta,q}\|_{\mathcal{H}}\le C\varepsilon\left|\ln\varepsilon\right|.
\]
Moreover the map $q\mapsto\phi_{\delta,q}$ is a $C^{1}(\partial M,\mathcal{H})$
map.\end{prop}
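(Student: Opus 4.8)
The plan is to solve the auxiliary equation (\ref{eq:rid1}) by a contraction mapping argument in its equivalent form (\ref{eq:rid1equiv}), namely $L(\phi)=N(\phi)+R$. By Lemma \ref{lem:L} the linear operator $L=L_{\delta,q}$ is invertible on $K_{\delta,q}^{\bot}\cap\mathcal{H}$ with $\|L^{-1}\|\le 1/C_{0}$ uniformly in $q\in\partial M$ and $d\in[a,b]$, so (\ref{eq:rid1}) is equivalent to the fixed point problem $\phi=T(\phi)$ for the map $T(\phi):=L^{-1}\left(N(\phi)+R\right)$. Lemma \ref{lem:R} already supplies $\|R\|_{\mathcal{H}}\le C_{1}\varepsilon|\ln\varepsilon|$, so the task reduces to estimating the nonlinear term $N$ defined in (\ref{eq:Ndef}) and then invoking Banach's theorem on a small ball.

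First I would establish two nonlinear estimates. By continuity of $i^{*}$ (combined with the Nittka-type bound (\ref{eq:nittka}) in the supercritical case) it suffices to control $f_{\varepsilon}(W_{\delta,q}+\phi)-f_{\varepsilon}(W_{\delta,q})-f_{\varepsilon}'(W_{\delta,q})[\phi]$ in the relevant $L^{\frac{2(n-1)}{n}}$-type norm on $\partial M$. Since $n\ge 7$ forces $1<\tfrac{n}{n-2}\le\tfrac{7}{5}<2$, the nonlinearity $t\mapsto (t^{+})^{\frac{n}{n-2}\pm\varepsilon}$ is only Hölder-differentiable, so instead of a double Taylor expansion I would use the elementary pointwise inequalities valid for $1<p<2$, namely $\left|(a+b)_{+}^{p}-a_{+}^{p}-p\,a_{+}^{p-1}b\right|\le C|b|^{p}$ together with the matching bound for differences. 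This yields, on a ball $B_{\rho}=\{\|\phi\|_{\mathcal{H}}\le\rho\}$ with $\rho$ small, $\|N(\phi)\|_{\mathcal{H}}\le C\|\phi\|_{\mathcal{H}}^{\frac{n}{n-2}}$ and $\|N(\phi_{1})-N(\phi_{2})\|_{\mathcal{H}}\le C\left(\|\phi_{1}\|_{\mathcal{H}}+\|\phi_{2}\|_{\mathcal{H}}\right)^{\frac{2}{n-2}}\|\phi_{1}-\phi_{2}\|_{\mathcal{H}}$.

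With these in hand the fixed point argument is routine. Choosing $\rho=M\varepsilon|\ln\varepsilon|$ with $M$ large, the self-map property follows from $\|T(\phi)\|_{\mathcal{H}}\le\tfrac{1}{C_{0}}\left(C\rho^{\frac{n}{n-2}}+C_{1}\varepsilon|\ln\varepsilon|\right)\le\rho$, which holds for $\varepsilon$ small because $\tfrac{n}{n-2}>1$ makes $\rho^{\frac{n}{n-2}}=o(\rho)$; the contraction property follows because the Lipschitz constant $\tfrac{C}{C_{0}}(2\rho)^{\frac{2}{n-2}}$ tends to $0$. Banach's theorem then produces a unique $\phi_{\delta,q}\in B_{\rho}$ solving (\ref{eq:rid1}), and membership in $B_{\rho}$ is precisely the asserted bound $\|\phi_{\delta,q}\|_{\mathcal{H}}\le C\varepsilon|\ln\varepsilon|$. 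For the $C^{1}$-dependence on $q$ I would apply the implicit function theorem to the $C^{1}$ map $F(\phi,q):=\Pi_{\delta,q}^{\bot}\{W_{\delta,q}+\phi-i^{*}(f_{\varepsilon}(W_{\delta,q}+\phi))\}$; all its ingredients — the bubble $W_{\delta,q}$, the functions $Z_{\delta,q}^{i}$ spanning $K_{\delta,q}$, the projection $\Pi_{\delta,q}^{\bot}$, the operator $i^{*}$ and the nonlinearity $f_{\varepsilon}$ — depend in a $C^{1}$ way on $q$ through the Fermi coordinates $\psi_{q}^{\partial}$. Since $\partial_{\phi}F$ at the solution equals $L_{\delta,q}$ up to a perturbation that is small in operator norm because $\phi_{\delta,q}$ is small, it is invertible by Lemma \ref{lem:L}, and the implicit function theorem gives that $q\mapsto\phi_{\delta,q}$ is $C^{1}(\partial M,\mathcal{H})$.

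I expect the main obstacle to be the nonlinear estimate on $N$ in the supercritical case. There the ambient space is the $\varepsilon$-dependent Banach space $\mathcal{H}=H^{1}(M)\cap L^{s_{\varepsilon}}(\partial M)$, so one must control $i^{*}$ simultaneously in the $H$-norm and in the $L^{s_{\varepsilon}}(\partial M)$-norm through the sharp regularity estimate (\ref{eq:nittka}) with the exponents $q,r$ chosen as in (\ref{eq:nittka1}), and check that all constants remain uniform as $\varepsilon\to 0$ despite the moving exponents. Combined with the low regularity $1<\tfrac{n}{n-2}<2$ of the nonlinearity, keeping every bound uniform in $q$, $d$ and $\varepsilon$ is the delicate point.
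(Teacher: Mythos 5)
Your proposal is correct and follows essentially the same route as the paper: the equation \eqref{eq:rid1equiv} is solved by Banach's fixed point theorem for $T(\phi)=L^{-1}(N(\phi)+R)$ on a ball of radius $O(\varepsilon|\ln\varepsilon|)$, using Lemma \ref{lem:L} for the uniform invertibility of $L$, Lemma \ref{lem:R} for the error term, a mean-value/H\"older argument on $f_\varepsilon$ (the paper phrases it as a small contraction constant $\gamma$, you as the quantitative bounds $\|N(\phi)\|_{\mathcal H}\le C\|\phi\|_{\mathcal H}^{n/(n-2)}$ and the H\"older-type Lipschitz estimate, which are equivalent here since $1<\tfrac{n}{n-2}<2$ for $n\ge 7$), and the implicit function theorem for the $C^1$ dependence on $q$. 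Your treatment of the supercritical case via the Nittka estimates \eqref{eq:nittka}--\eqref{eq:nittka1} in the $\varepsilon$-dependent space $\mathcal H$ is also exactly the paper's mechanism.
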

\begin{proof}
First of all, we point out that   $N$ is a contraction mapping.
 We remark that the conjugate exponent
of $s_{\varepsilon}$ is 
\[
s_{\varepsilon}^{'}=\left\{ \begin{array}{cc}
\frac{2(n-1)}{n} & \text{in the subcritical case}\\
\\
\frac{2(n-1)+\varepsilon n(n-2)}{n+\varepsilon n(n-2)} & \text{in the supercritical case}
\end{array}\right..
\]
By the properties of $i^{*}$ and using the expansion of $f_{\varepsilon}(W_{\delta,q}+\phi_{1})$
centered in$W_{\delta,q}+\phi_{2}$ we have 
\begin{align*}
\|N(\phi_{1})-N(\phi_{2})\|_{\mathcal{H}}\le & \|f_{\varepsilon}(W_{\delta,q}+\phi_{1})-f_{\varepsilon}(W_{\delta,q}+\phi_{2})-f_{\varepsilon}'(W_{\delta,q})[\phi_{1}-\phi_{2}]\|_{L^{s_{\varepsilon}'}(\partial M)}\\
\le & \left\Vert \left(f_{\varepsilon}'\left(W_{\delta,q}+\theta\phi_{1}+(1-\theta)\phi_{2}\right)-f_{\varepsilon}'(W_{\delta,q})\right)[\phi_{1}-\phi_{2}]\right\Vert _{L^{s_{\varepsilon}'}(\partial M)}
\end{align*}
and, since $|\phi_{1}-\phi_{2}|^{s_{\varepsilon}'}\in L^{s_{\varepsilon}/s_{\varepsilon}'}(\partial M)$
and $|f_{\varepsilon}'(\cdot)|^{s_{\varepsilon}'}\in L^{\left(\frac{s_{\varepsilon}}{s_{\varepsilon}'}\right)^{\prime}}(\partial M)$
since $f_{\varepsilon}'(\cdot)\in L^{s_{\varepsilon}}(\partial M)$,
we have 
\begin{multline*}
\|N(\phi_{1})-N(\phi_{2})\|_{\mathcal{H}}\\
\le\left\Vert \left(f_{\varepsilon}'\left(W_{\delta,q}+\theta\phi_{1}+(1-\theta)\phi_{2}\right)-f_{\varepsilon}'(W_{\delta,q})\right)\right\Vert _{L^{s_{\varepsilon}}(\partial M)}\|\phi_{1}-\phi_{2}\|_{L^{s_{\varepsilon}}(\partial M)}\\
=\gamma\|\phi_{1}-\phi_{2}\|_{\mathcal{H}}
\end{multline*}
where 
\[
\gamma=\left\Vert \left(f_{\varepsilon}'\left(W_{\delta,q}+\theta\phi_{1}+(1-\theta)\phi_{2}\right)-f_{\varepsilon}'(W_{\delta,q})\right)\right\Vert _{L^{s_{\varepsilon}}(\partial M)}<1
\]
provided $\|\phi_{1}\|_{\mathcal{H}}$ and $\|\phi_{2}\|_{\mathcal{H}}$
sufficiently small. 

In the same way we can prove that 
$
\|N(\phi)\|_{\mathcal{H}}\le\gamma\|\phi\|_{\mathcal{H}}
$
with $\gamma<1$ if $\|\phi\|_{\mathcal{H}}$ is sufficiently small.\\

 Next, 
by Lemma \ref{lem:L} and by Lemma \ref{lem:R} we have
\[
\|L^{-1}(N(\phi)+R_{\varepsilon,\delta,q})\|_{\mathcal{H}}\le C\left(\gamma\|\phi\|_{\mathcal{H}}+\varepsilon\left|\ln\varepsilon\right|\right)
\]
 where $C=\max\{C_{0},C_{0}C_{1}\}>0$ being $C_{0},C_{1}$ the constants
which appear in Lemma \ref{lem:L} and in Lemma \ref{lem:R}. Notice
that, given $C>0$,  it is possible (up to choose
$\|\phi\|_{\mathcal{H}}$ sufficiently small) to choose $0<C\gamma<1/2$. 

Now, if $\|\phi\|_{\mathcal{H}}\le2C\varepsilon\left|\ln\varepsilon\right|$,
then the map
\[
T(\phi):=L^{-1}(N(\phi)+R_{\varepsilon,\delta,q})
\]
is a contraction from the ball $\|\phi\|_{\mathcal{H}}\le2C\varepsilon\left|\ln\varepsilon\right|$
in itself, so, by the fixed point Theorem, there exists a unique $\phi_{\delta,q}$
with $\|\phi_{\delta,q}\|_{\mathcal{H}}\le2C\varepsilon\left|\ln\varepsilon\right|$
solving (\ref{eq:rid1equiv}) and hence (\ref{eq:rid1}). The regularity
of the map $q\mapsto\phi_{\delta,q}$ can be proven via the implicit
function Theorem.
\end{proof}

\section{The reduced problem}\label{tre}

Problem (\ref{eq:1.2}) has a variational structure. Weak solutions
to (\ref{eq:1.2}) are critical points of the energy functional $J_\varepsilon:{\mathcal H}\to\mathbb R$ 
\begin{multline*}
J_{\varepsilon}(u)=\frac{1}{2}\int_{M}\left(|\nabla u|^{2}+a(x)u^{2}\right)d\mu_{g}+\frac{n-2}{4}\int_{\partial M}b(x)u^{2}d\sigma\\
-\frac{(n-2)^{2}}{2n-2\pm\varepsilon(n-2)}\int_{\partial M}u^{\frac{2n-2}{n-2}\pm\varepsilon}d\sigma
\end{multline*}

 Let us introduce the reduced energy $I_\varepsilon:(0,+\infty)\times\partial M\to\mathbb R$ by 
\begin{equation}\label{rido}
I_{\varepsilon}(       d    ,q):=J_{\varepsilon}(W_{\varepsilon       d    ,q}+\phi_{\varepsilon       d    ,q}) .
\end{equation}
where the remainder term $\phi_{\varepsilon       d    ,q}$ has been found in \eqref{prop:phi}.

 \subsection{The reduced energy}
 
Here we will use the following expansion for the metric tensor on
$M$. 
\begin{eqnarray}
g^{ij}(y) & = & \delta_{ij}+2h_{ij}(0)y_{n}+O(|y|^{2})\text{ for }i,j=1,\dots n-1\label{eq:g1}\\
g^{in}(y) & = & \delta_{in}\text{ for }i=1,\dots n-1\label{eq:g2}\\
\sqrt{g}(y) & = & 1-(n-1)H(0)y_{n}+O(|y|^{2})\label{eq:g3}
\end{eqnarray}
where $(y_{1},\dots,y_{n})$ are the Fermi coordinates and, by definition
of $h_{ij}$, 
\begin{equation}
H=\frac{1}{n-1}\sum_{i}^{n-1}h_{ii}.\label{eq:H}
\end{equation}
We also recall that on $\partial M$ the Fermi coordinates coincide
with the exponential ones, so we have that
\begin{equation}
\sqrt{g}(y_{1},\dots,y_{n-1},0)=1+O(|y|^{2}).\label{eq:gexp}
\end{equation}
To improve the readability of this paper, thereafter we will introduce
$z=(z_{1},\dots,z_{n-1})$ to indicate the first $n-1$ Fermi coordinates
and $t$ to indicate the last one, so $(y_{1},\dots,y_{n-1},y_{n})=(z,t)$.
Moreover, indices $i,j$ conventionally refer to sums from $1$
to $n-1$, while $l,m$ usually refer to sums from $1$ to $n$. 

\begin{prop}\label{expan}

\begin{itemize}
\item[(i)] If $(       d    _{0},q_{0})\in(0,+\infty)\times\partial M$
is a critical point for the reduced energy $I_\varepsilon$ defined in \eqref{rido},
then $W_{\varepsilon       d _0   ,q_0}+\phi_{\varepsilon       d_0    ,q_0}\in\mathcal{H}$
solves problem (\ref{eq:1.2}).

\item[(ii)] It holds true that
$$ 
I_\varepsilon(d,q)=c_n(\varepsilon)+\varepsilon\left[\alpha_n d\varphi(q)-\beta_n\ln d\right]+o(\varepsilon)\ \hbox{in the subcritical case}
$$ 
and
$$ 
I_\varepsilon(d,q)=c_n(\varepsilon)+\varepsilon\left[\alpha_n d\varphi(q)+\beta_n\ln d\right]+o(\varepsilon)\ \hbox{in the supercritical case}
$$ 
$C^0-$uniformly with respect to $d$ in compact sets of $(0,+\infty)$ and $q\in\partial M.$
Here $c_n(\varepsilon)$ is a constant which only depends on $\varepsilon $ and $n$,  $\alpha_n$ and $\beta_n$ are positive constants which only depend  on $n $
and  
 $\varphi(q)=h(q)-H_g(q)$  is the function defined in \eqref{diff} .
\end{itemize}
\end{prop}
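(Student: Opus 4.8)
The plan is to handle the two assertions separately: part (i) is the standard variational consequence of the Lyapunov--Schmidt scheme, while part (ii) is the genuine energy computation.

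For part (i), I would exploit the variational structure. Since $\phi_{\delta,q}$ solves the projected equation \eqref{eq:rid1}, the residual lies in the finite-dimensional space $K_{\delta,q}$, so there are coefficients $c^{i}=c^{i}(d,q)$ with
\[
W_{\delta,q}+\phi_{\delta,q}-i^{*}\left(f_{\varepsilon}(W_{\delta,q}+\phi_{\delta,q})\right)=\sum_{i=0}^{n-1}c^{i}Z_{\delta,q}^{i}.
\]
Thus $W_{\delta,q}+\phi_{\delta,q}$ solves \eqref{pre}, and hence \eqref{eq:1.2}, exactly when all $c^{i}$ vanish, i.e. when \eqref{eq:rid2} holds. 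Because $J_{\varepsilon}(u)=\tfrac12\|u\|_{H}^{2}-\tfrac{(n-2)^{2}}{2n-2\pm\varepsilon(n-2)}\int_{\partial M}(u^{+})^{\frac{2n-2}{n-2}\pm\varepsilon}d\sigma$, the $\langle\langle\cdot,\cdot\rangle\rangle_{H}$-gradient of $J_{\varepsilon}$ at $W_{\delta,q}+\phi_{\delta,q}$ is precisely $\sum_{i}c^{i}Z_{\delta,q}^{i}$. Differentiating the defining identity $I_{\varepsilon}(d,q)=J_{\varepsilon}(W_{\delta,q}+\phi_{\delta,q})$ with respect to $d$ and to each boundary coordinate $q_{k}$ and applying the chain rule expresses $\partial_{d}I_{\varepsilon}$ and $\partial_{q_{k}}I_{\varepsilon}$ as a linear combination of the $c^{i}$. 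Since $\partial_{d}W_{\delta,q}$ and $\partial_{q_{k}}W_{\delta,q}$ are, to leading order, nonzero multiples of $Z_{\delta,q}^{0}$ and $Z_{\delta,q}^{k}$, and since $\langle\langle Z_{\delta,q}^{i},Z_{\delta,q}^{j}\rangle\rangle_{H}=C\delta_{ij}+o(1)$ (as recorded in the proof of Lemma \ref{lem:L}), the coefficient matrix of this system is invertible for $\varepsilon$ small. Hence $\nabla I_{\varepsilon}(d_{0},q_{0})=0$ forces $c^{i}(d_{0},q_{0})=0$ for all $i$, which is the claim.

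For part (ii), I would first reduce the evaluation of $I_{\varepsilon}$ to that of $J_{\varepsilon}(W_{\delta,q})$. Writing
\[
J_{\varepsilon}(W_{\delta,q}+\phi_{\delta,q})=J_{\varepsilon}(W_{\delta,q})+J_{\varepsilon}'(W_{\delta,q})[\phi_{\delta,q}]+\tfrac{1}{2}J_{\varepsilon}''(\zeta)[\phi_{\delta,q},\phi_{\delta,q}],
\]
the quadratic remainder is $O(\|\phi_{\delta,q}\|_{\mathcal H}^{2})$, while the linear term equals $\langle\langle W_{\delta,q}-i^{*}(f_{\varepsilon}(W_{\delta,q})),\phi_{\delta,q}\rangle\rangle_{H}=-\langle\langle R_{\varepsilon,\delta,q},\phi_{\delta,q}\rangle\rangle_{H}$, using $\phi_{\delta,q}\in K_{\delta,q}^{\bot}$ and the definition \eqref{eq:Rdef} of $R$. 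By Lemma \ref{lem:R} and Proposition \ref{prop:phi} both terms are $O\!\left((\varepsilon|\ln\varepsilon|)^{2}\right)=o(\varepsilon)$, so $I_{\varepsilon}(d,q)=J_{\varepsilon}(W_{\delta,q})+o(\varepsilon)$ uniformly on compact sets. It then remains to expand $J_{\varepsilon}(W_{\delta,q})$ with $\delta=d\varepsilon$ by inserting the Fermi-coordinate expansions \eqref{eq:g1}--\eqref{eq:gexp} and rescaling $y=\delta\eta$. The interior quadratic part $\tfrac12\int_{M}(|\nabla_{g}W_{\delta,q}|^{2}+aW_{\delta,q}^{2})d\mu_{g}$ yields the flat Dirichlet energy of the bubble plus, through the terms $2h_{ij}(0)t$ in $g^{ij}$ and $-(n-1)H(0)t$ in $\sqrt g$, a contribution proportional to $\delta\,H_{g}(q)$, the potential term being $O(\delta^{2})$; the boundary term $\tfrac{n-2}{4}\int_{\partial M}bW_{\delta,q}^{2}d\sigma$ rescales to $\delta\,b(q)\cdot\tfrac{n-2}{4}\int_{\mathbb R^{n-1}}U_{1}^{2}(z,0)dz+o(\delta)$ (finite for $n\ge 7$). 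These combine so that only the difference $\varphi(q)=b(q)-H_{g}(q)$ of \eqref{diff} survives at order $\delta=d\varepsilon$, producing the term $\alpha_{n}d\,\varphi(q)\,\varepsilon$. Finally the nonlinear term is expanded via the Taylor developments \eqref{eq:Uallaeps} and \eqref{eq:deltaallaeps}: the $\pm\varepsilon$ in the exponent produces a logarithmic contribution proportional to $\varepsilon\ln\delta=\varepsilon\ln d+\varepsilon\ln\varepsilon$, whose $\ln\varepsilon$ piece is absorbed into the constant $c_{n}(\varepsilon)$ and whose $\ln d$ piece gives $-\beta_{n}\varepsilon\ln d$ in the subcritical case and $+\beta_{n}\varepsilon\ln d$ in the supercritical case, matching the two displayed formulas. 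Here $\alpha_{n}$ is a multiple of $\int_{\mathbb R^{n-1}}U_{1}^{2}(z,0)dz$ and $\beta_{n}$ a multiple of $\int_{\mathbb R^{n-1}}U_{1}^{\frac{2(n-1)}{n-2}}(z,0)\ln U_{1}(z,0)\,dz$, both positive.

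The main obstacle is the bookkeeping in the geometric expansion: one must verify that the curvature contributions from the interior integral --- arising through both the inverse-metric coefficient $h_{ij}$ and the volume density $H$ --- recombine, after integration by parts against the explicit profile $U_{1}$, into exactly $-H_{g}(q)$, with a normalization matching that of the boundary potential $b(q)$, so that precisely $\varphi=b-H_{g}$ appears at order $\delta$. Establishing the $C^{0}$-uniformity in $(d,q)$ and checking that every neglected term is genuinely $o(\varepsilon)$ uniformly on compacta requires care, but beyond that the argument is a direct, if lengthy, expansion.
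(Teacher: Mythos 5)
Your overall route coincides with the paper's: for part (i) you use the chain rule together with the almost-diagonal Gram matrix $\left\langle\left\langle Z_{\delta,q}^{i},Z_{\delta,q}^{j}\right\rangle\right\rangle_{H}=C\delta_{ij}+o(1)$ to force $c^{i}=0$ (you are in fact slightly more careful than the paper, which only displays the derivatives along the boundary coordinates $y_{h}$, $h=1,\dots,n-1$; the derivative in $d$ is what kills $c^{0}$, and you include it explicitly), and for part (ii) you first reduce to $J_{\varepsilon}(W_{\delta,q})$ and then expand in Fermi coordinates. Your Step-1 packaging via the abstract Taylor expansion, with the linear term identified as $-\left\langle\left\langle R,\phi_{\delta,q}\right\rangle\right\rangle_{H}$ using $\phi_{\delta,q}\in K_{\delta,q}^{\bot}$ and \eqref{eq:Rdef}, is a tidier version of the paper's term-by-term estimate and yields the same $O\left((\varepsilon|\ln\varepsilon|)^{2}\right)=o(\varepsilon)$ bound; the geometric bookkeeping you describe (the $h_{ij}$ and $\sqrt{g}$ contributions recombining into $-H_{g}(q)$ via the integral identities \eqref{eq:integrali1}--\eqref{eq:integrali3}) is exactly the paper's Step 2.

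There is, however, one concrete error: the identification of $\beta_{n}$. The logarithmic term comes, as you say, from $\delta^{\mp\varepsilon\frac{n-2}{2}}=1\mp\frac{n-2}{2}\varepsilon\ln\delta+\dots$ in \eqref{eq:deltaallaeps}, but this factor multiplies $\int_{\mathbb{R}^{n-1}}U^{\frac{2n-2}{n-2}}(z,0)\,dz$, so $\beta_{n}$ is a positive multiple of \emph{that} integral (explicitly $\frac{(n-2)^{3}}{2(2n-2)}\,\omega_{n-1}I_{n-1}^{n-2}$ with $I_{n-1}^{n-2}=\frac{n-3}{2(n-2)}I_{n-2}^{n-2}>0$), not of $\int_{\mathbb{R}^{n-1}}U^{\frac{2n-2}{n-2}}\ln U\,dz$ as you claim. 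The $\ln U$ integral is the $O(\varepsilon)$ constant produced by \eqref{eq:Uallaeps} and is absorbed into $c_{n}(\varepsilon)$; moreover it is strictly \emph{negative}, since $U(z,0)=(1+|z|^{2})^{-\frac{n-2}{2}}\le1$ gives $\ln U(z,0)\le0$. So with your identification the asserted positivity of $\beta_{n}$ fails, and this is not cosmetic: the proof of Theorem \ref{thm:main} sets $d_{0}=\beta_{n}/(\alpha_{n}\varphi(q_{0}))$, which requires $\beta_{n}>0$. Once $\beta_{n}$ is correctly traced to the coefficient of $\varepsilon\ln\delta$, the rest of your outline matches the paper's argument.
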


\begin{proof}

{\underline{\em Proof of (i)}}.

Set $q:=q(y)=\psi_{q_{0}}^{\partial}(y).$ Since $(       d    _{0},q_{0})$
is a critical point, we have, for any $h\in1,\dots n-1$,
\begin{align*}
0 & =\left.\frac{\partial}{\partial y_{h}}I_{\varepsilon}(       d    ,\psi_{q_{0}}^{\partial}(y))\right|_{y=0}\\
 & =\left.\left\langle \left\langle W_{\varepsilon       d    ,q(y)}+\phi_{\varepsilon       d    ,q(y)}-i^{*}(f_{\varepsilon}(W_{\varepsilon       d    ,q(y)}+\phi_{\varepsilon       d    ,q(y)})),\frac{\partial}{\partial y_{h}}W_{\varepsilon       d    ,q(y)}+\frac{\partial}{\partial y_{h}}\phi_{\varepsilon       d    ,q(y)}\right\rangle \right\rangle _{H}\right|_{y=0}\\
 & =\sum_{i=0}^{n-1}c_{\varepsilon}^{i}\left.\left\langle \left\langle Z_{\varepsilon       d    ,q(y)}^{i},\frac{\partial}{\partial y_{h}}W_{\varepsilon       d    ,q(y)}+\frac{\partial}{\partial y_{h}}\phi_{\varepsilon       d    ,q(y)}\right\rangle \right\rangle _{H}\right|_{y=0}\\
 & =\sum_{i=0}^{n-1}c_{\varepsilon}^{i}\left.\left\langle \left\langle Z_{\varepsilon       d    ,q(y)}^{i},\frac{\partial}{\partial y_{h}}W_{\varepsilon       d    ,q(y)}\right\rangle \right\rangle _{H}\right|_{y=0}-\sum_{i=0}^{n-1}c_{l}^{i}\left.\left\langle \left\langle \frac{\partial}{\partial y_{h}}Z_{\varepsilon       d    ,q(y)}^{i},\phi_{\varepsilon       d    ,q(y)}\right\rangle \right\rangle _{H}\right|_{y=0}
\end{align*}
using that $\phi_{\varepsilon       d    ,q(y)}$ is a solution of (\ref{eq:rid1})
and that $\left\langle \left\langle Z_{\varepsilon       d    ,q(y)}^{i},\frac{\partial}{\partial y_{h}}\phi_{\varepsilon       d    ,q(y)}\right\rangle \right\rangle =-\left\langle \left\langle \frac{\partial}{\partial y_{h}}Z_{\varepsilon       d    ,q(y)}^{i},\phi_{\varepsilon       d    ,q(y)}\right\rangle \right\rangle $
since $\phi_{\varepsilon       d    ,q(y)}\in K_{\varepsilon       d    ,q(y)}^{\bot}$
for all $y$. Now it is enough to observe that 
\[
\left\langle \left\langle \frac{\partial}{\partial y_{h}}Z_{\varepsilon       d    ,q(y)}^{i},\phi_{\varepsilon       d    ,q(y)}\right\rangle \right\rangle _{H}\le\left\Vert \frac{\partial}{\partial y_{h}}Z_{\varepsilon       d    ,q(y)}^{i}\right\Vert _{H}\left\Vert \phi_{\varepsilon       d    ,q(y)}\right\Vert _{H}=o(1)
\]
 
\[
\left\langle \left\langle Z_{\varepsilon       d    ,q(y)}^{i},\frac{\partial}{\partial y_{h}}W_{\varepsilon       d    ,q(y)}\right\rangle \right\rangle _{H}=\frac{1}{\varepsilon       d    }\left\langle \left\langle Z_{\varepsilon       d    ,q(y)}^{i},Z_{\varepsilon       d    ,q(y)}^{h}\right\rangle \right\rangle _{H}=\frac{1}{\varepsilon       d    }\delta^{ih}+o(1)
\]
to conclude that
\[
0=\frac{1}{\varepsilon       d    }\sum_{i=0}^{n-1}c_{\varepsilon}^{i}(\delta^{ih}+o(1))
\]
and so $ $$c_{\varepsilon}^{i}=0$ for all $i=0,\dots,n-1$. This
conclude the proof.
 \\

{\underline{\em Proof of (ii)}}.

{\em Step 1: we prove that 
for $\varepsilon$ small enough and for any $q\in\partial M$,
\[
\left|J_{\varepsilon}(W_{\delta,q}+\phi_{\delta,q})-J_{\varepsilon}(W_{\delta,q})\right|\le\|\phi_{\delta,q}\|_{\mathcal{H}}^{2}+C\varepsilon\left|\ln\varepsilon\right|\|\phi_{\delta,q}\|_{\mathcal{H}}=o(\varepsilon)
\]
}\\

We have 
\begin{multline*}
\left|J_{\varepsilon}(W_{\delta,q}+\phi_{\delta,q})-J_{\varepsilon}(W_{\delta,q})\right|=\left|\int_{M}\left[-\Delta_{g}W_{\delta,q}+a(x)W_{\delta,q}\right]\phi_{\delta,q}d\mu_{g}\right|+\frac{1}{2}\|\phi_{\delta,q}\|_{H}^{2}\\
+\left|\int_{\partial M}\left[\frac{\partial}{\partial\nu}W_{\delta,q}+\frac{n-2}{2}b(x)W_{\delta,q}-f_{0}(W_{\delta,q})\right]\phi_{\delta,q}d\sigma\right|+\left|\int_{\partial M}\left[f_{0}(W_{\delta,q})-f_{\varepsilon}(W_{\delta,q})\right]\phi_{\delta,q}d\sigma\right|\\
+\left|\int_{\partial M}\frac{(n-2)^{2}}{2n-2\pm\varepsilon(n-2)}\left[(W_{\delta,q}+\phi_{\delta,q})^{\frac{2n-2}{n-2}\pm\varepsilon}-W_{\delta,q}^{\frac{2n-2}{n-2}\pm\varepsilon}\right]-f_{\varepsilon}(W_{\delta,q})\phi_{\delta,q}d\sigma\right|.
\end{multline*}
With the same estimate of $I_{1}$ in Lemma \ref{lem:R} we obtain
that 
\[
\left|\int_{M}\left[-\Delta_{g}W_{\delta,q}+a(x)W_{\delta,q}\right]\phi_{\delta,q}d\mu_{g}\right|=O(\delta)\|\phi_{\delta,q}\|_{H},
\]
and in light of the estimate of $I_{2}$ and $I_{3}$ in Lemma \ref{lem:R}
we get 
\[
\left|\int_{\partial M}\left[\frac{\partial}{\partial\nu}W_{\delta,q}+\frac{n-2}{2}b(x)W_{\delta,q}-f_{0}(W_{\delta,q})\right]\phi_{\delta,q}d\sigma\right|=O(\delta)\|\phi_{\delta,q}\|_{H}.
\]
In the subcritical case, following the computation in (\ref{eq:R3})
we obtain
\begin{multline*}
\left|\int_{\partial M}\left[f_{0}(W_{\delta,q})-f_{\varepsilon}(W_{\delta,q})\right]\phi_{\delta,q}d\sigma\right|\\
\le C\left|f_{0}(W_{\delta,q})-f_{\varepsilon}(W_{\delta,q})\right|_{L^{\frac{2(n-1)}{n}}(\partial M)}\left|\phi_{\delta,q}\right|_{L^{\frac{2(n-1)}{n-2}}(\partial M)}\\
=\left[O(\varepsilon)+O(\varepsilon\ln\delta)\right]\|\phi_{\delta,q}\|_{H}=O(\varepsilon\left|\ln\varepsilon\right|)\|\phi_{\delta,q}\|_{H}
\end{multline*}
and in a similar way, for the supercritical case we get, in light
of (\ref{eq:R3sup})
\begin{multline*}
\left|\int_{\partial M}\left[f_{0}(W_{\delta,q})-f_{\varepsilon}(W_{\delta,q})\right]\phi_{\delta,q}d\sigma\right|\\
\le C\left|f_{0}(W_{\delta,q})-f_{\varepsilon}(W_{\delta,q})\right|_{L^{\frac{2(n-1)}{n}+O^{+}(\varepsilon)}(\partial M)}\left|\phi_{\delta,q}\right|_{L^{\frac{2(n-1)}{n-2}-O^{+}(\varepsilon)}(\partial M)}\\
\le\left(\delta^{-O^{+}(\varepsilon)}\left\{ O(\varepsilon\ln\delta)+O(\varepsilon)\right\} +O(\delta^{2})\right)\|\phi_{\delta,q}\|_{H}=O(\varepsilon\left|\ln\varepsilon\right|)\|\phi_{\delta,q}\|_{H}
\end{multline*}
Finally, using taylor expansion formula we have immediately, for some
$\theta\in(0,1)$,
\begin{multline*}
\left|\int_{\partial M}\frac{(n-2)^{2}}{2n-2\pm\varepsilon(n-2)}\left[(W_{\delta,q}+\phi_{\delta,q})^{\frac{2n-2}{n-2}\pm\varepsilon}-W_{\delta,q}^{\frac{2n-2}{n-2}\pm\varepsilon}\right]-f_{\varepsilon}(W_{\delta,q})\phi_{\delta,q}d\sigma\right|\\
=\left|\frac{n\pm\varepsilon(n-2)}{2}\int_{\partial M}(W_{\delta,q}+\theta\phi_{\delta,q})^{\frac{2}{n-2}\pm\varepsilon}\phi_{\delta,q}^{2}d\sigma\right|\\
\le C\left[\int_{\partial M}|W_{\delta,q}+\theta\phi_{\delta,q}|^{\left(\frac{2}{n-2}\pm\varepsilon\right)\frac{s_{\varepsilon}}{s_{\varepsilon}-2}}d\sigma\right]^{\frac{s_{\varepsilon}-2}{s_{\varepsilon}}}\left[\int_{\partial M}|\phi_{\delta,q}|^{s_{\varepsilon}}d\sigma\right]^{\frac{2}{s_{\varepsilon}}}\\
\le C|W_{\delta,q}+\theta\phi_{\delta,q}|_{L^{s_{\varepsilon}}}^{s_{\varepsilon}-2}\|\phi_{\delta,q}\|_{\mathcal{H}}^{2}\le C\|\phi_{\delta,q}\|_{\mathcal{H}}^{2}.
\end{multline*}
Choosing $\delta=       d    \varepsilon$, and recalling that, by Proposition
\ref{prop:phi}, $\|\phi_{\delta,q}\|_{\mathcal{H}}=O(\varepsilon|\ln\varepsilon|)$
concludes the proof.
 \bigskip

{\em  Step 2: we prove that 
\begin{align*}
J_{\varepsilon}(W_{\delta,q}) & =C(\varepsilon)+\varepsilon\left\{        d    \frac{n-2}{4}\left[b(q)-H(q)\right]\pm\ln       d    \frac{(n-2)^{3}(n-3)}{4(n-2)(2n-2)}\right\} \omega_{n-1}I_{n-2}^{n-2}+o(\varepsilon)
\end{align*}
$C^{0}$-uniformly with respect to $d$  in compact sets of $(0,+\infty)$ and $q\in \partial M$, where 
\begin{align*}
C(\varepsilon) & =\frac{1}{2}\int_{\mathbb{R}_{+}^{n}}|\nabla U(y)|^{2}dy-\frac{(n-2)^{2}}{2n-2}\int_{\mathbb{R}^{n-1}}U^{\frac{2n-2}{n-2}}(z,0)dz\\
 & \pm\varepsilon\frac{(n-2)^{3}}{2n-2}\int_{\mathbb{R}^{n-1}}U^{\frac{2n-2}{n-2}}(z,0)dz\mp\varepsilon\frac{(n-2)^{2}}{2n-2}\int_{\mathbb{R}^{n-1}}U^{\frac{2n-2}{n-2}}(z,0)\ln U(z,0)dz\\
 & \mp\varepsilon|\ln\varepsilon|\frac{(n-2)^{3}}{2(2n-2)}\int_{\mathbb{R}^{n-1}}U^{\frac{2n-2}{n-2}}(z,0)dz,
\end{align*}
 and
\[
I_{n-2}^{n-2}=\int_{0}^{\infty}\frac{s^{n-2}}{\left(1+s^{2}\right)^{n-2}}dz
\]
 and $\omega_{n-1}$ is the volume of the $n-1$ dimensional unit
ball.}\\

We compute each term separately. First, we have, by change of variables
and by (\ref{eq:g1}), (\ref{eq:g2}), (\ref{eq:g3}),
\begin{align*}
\int_{M}|\nabla W_{\delta,q}|^{2}d\mu_{g}= 
& \sum_{l,m=1}^{n}\int_{\mathbb{R}_{+}^{n}}g^{lm}
(\delta y)\frac{\partial}{\partial y_{l}}U(y)\frac{\partial}{\partial y_{m}}U(y)\sqrt{g}(\delta y)dy+o(\delta)\\
= & \int_{\mathbb{R}_{+}^{n}}|\nabla U(y)|^{2}dy-\delta(n-1)H(q)\int_{\mathbb{R}_{+}^{n}}y_{n}|\nabla U(y)|^{2}dy\\
 & +2\delta\sum_{i,j=1}^{n-1}\int_{\mathbb{R}_{+}^{n}}y_{n}h_{ij}(q)
 \frac{\partial}{\partial y_{i}}U(y)\frac{\partial}{\partial y_{j}}U(y)dy+o(\delta)
\end{align*}
By simmetry argument we can simplify the last integral to obtain,
in a more compact form
\begin{align*}
\frac{1}{2}\int_{M}|\nabla W_{\delta,q}|^{2}d\mu_{g}=&
\frac{1}{2}\int_{\mathbb{R}_{+}^{n}}|\nabla U|^{2}-\delta\frac{(n-1)H(q)}{2}\int_{\mathbb{R}_{+}^{n}}y_{n}|\nabla U|^{2}\\
&+\delta\sum_{i=1}^{n-1}h_{ii}(q)\int_{\mathbb{R}_{+}^{n}}y_{n}\left(\frac{\partial U}{\partial y_{i}}(y)\right)^{2}+o(\delta).
\end{align*}
Since $\frac{\partial U}{\partial y_{i}}=\frac{\partial U}{\partial y_{l}}$
for all $i,l=1,\dots,n-1$ and by (\ref{eq:integrali3}) we get 
\begin{align*}
\sum_{i=1}^{n-1}h_{ii}(q)\int_{\mathbb{R}_{+}^{n}}y_{n}\left(\frac{\partial U}{\partial y_{i}}(y)\right)^{2}dy & =\frac{1}{n-1}\sum_{i=1}^{n-1}h_{ii}(q)\int_{\mathbb{R}_{+}^{n}}y_{n}\sum_{l=1}^{n-1}\left(\frac{\partial U}{\partial y_{l}}(y)\right)^{2}dy\\
 & =\frac{H(q)}{4}\int_{\mathbb{R}^{n-1}}U^{2}(z,0)dz,
\end{align*}
and in light of (\ref{eq:integrali1}) we conclude that
\[
\frac{1}{2}\int_{M}|\nabla W_{\delta,q}|^{2}d\mu_{g}=\frac{1}{2}\int_{\mathbb{R}_{+}^{n}}|\nabla U|^{2}-\delta\frac{(n-2)H(q)}{4}\int_{\mathbb{R}^{n-1}}U^{2}(z,0)dz+o(\delta).
\]
By change of variables, immediately we obtain 
\[
\frac{1}{2}\int_{M}a(x)|W_{\delta,q}|^{2}d\mu_{g}=\frac{\delta^{2}}{2}\int_{\mathbb{R}_{+}^{n}}a(x)U^{2}(y)\sqrt{g}(\delta y)dy+o(\delta^{2})=O(\delta^{2}).
\]
 Coming to boundary integral, we get, by change of variables, by (\ref{eq:gexp}),
and by expanding $b$, 
\begin{align*}
\frac{n-2}{4}\int_{\partial M}b(z)|W_{\delta,q}|^{2}d\sigma & =\delta\frac{n-2}{4}\int_{\mathbb{R}^{n-1}}b(\delta z)U^{2}(z,0)\sqrt{g}(\delta z)dz+O(\delta^{2})\\
 & =\delta b(q)\frac{n-2}{4}\int_{\mathbb{R}^{n-1}}U^{2}(z,0)dz+O(\delta^{2}).
\end{align*}
By (\ref{eq:Uallaeps}), (\ref{eq:deltaallaeps}) and (\ref{eq:gexp}),
we have 
\begin{align*}
\int_{\partial M}|W_{\delta,q}|^{\frac{2n-2}{n-2}\pm\varepsilon}d\sigma= & \int_{\mathbb{R}^{n-1}}\delta^{\mp\varepsilon\frac{n-2}{2}}U^{\frac{2n-2}{n-2}}(z,0)U^{\pm\varepsilon}(z,0)\sqrt{g}(\delta z)dz+o(\delta)\\
= & \int_{\mathbb{R}^{n-1}}U^{\frac{2n-2}{n-2}}(z,0)dz\pm\varepsilon\int_{\mathbb{R}^{n-1}}U^{\frac{2n-2}{n-2}}(z,0)\ln U(z,0)dz\\
 & \mp\frac{n-2}{2}\varepsilon\ln\delta\int_{\mathbb{R}^{n-1}}U^{\frac{2n-2}{n-2}}(z,0)dz+o(\delta)+O(\varepsilon^{2})+O(\varepsilon^{2}\ln\delta)
\end{align*}
and, since $\frac{(n-2)^{2}}{2n-2\pm\varepsilon(n-2)}=\frac{(n-2)^{2}}{2n-2}\mp\varepsilon\frac{(n-2)^{3}}{2n-2}$
we get 
\begin{multline*}
-\frac{(n-2)^{2}}{2n-2\pm\varepsilon(n-2)}\int_{\partial M}|W_{\delta,q}|^{\frac{2n-2}{n-2}-\varepsilon}d\sigma=-\frac{(n-2)^{2}}{2n-2}\int_{\mathbb{R}^{n-1}}U^{\frac{2n-2}{n-2}}(z,0)dz\\
\pm\varepsilon\frac{(n-2)^{3}}{2n-2}\int_{\mathbb{R}^{n-1}}U^{\frac{2n-2}{n-2}}(z,0)dz\mp\varepsilon\frac{(n-2)^{2}}{2n-2}\int_{\mathbb{R}^{n-1}}U^{\frac{2n-2}{n-2}}(z,0)\ln U(z,0)dz\\
\pm\frac{(n-2)^{3}}{2(2n-2)}\varepsilon\ln\delta\int_{\mathbb{R}^{n-1}}U^{\frac{2n-2}{n-2}}(z,0)dz+o(\delta)+O(\varepsilon^{2})+O(\varepsilon^{2}\ln\delta).
\end{multline*}
Notice that, with the choice $\delta=       d    \varepsilon$ it holds
$o(\delta)+O(\varepsilon^{2})+O(\varepsilon^{2}\ln\delta)=o(\varepsilon)$
and $\varepsilon\ln\delta=\varepsilon\ln       d    -\varepsilon\left|\ln\varepsilon\right|$.
At this point we have 
\begin{align*}
J_{\varepsilon}(W_{\delta,q}) & =C(\varepsilon)+\varepsilon       d    \frac{n-2}{4}\left[b(q)-H(q)\right]\int_{\mathbb{R}^{n-1}}U^{2}(z,0)dz\\
 & \pm\varepsilon\frac{(n-2)^{3}}{2(2n-2)}\ln       d    \int_{\mathbb{R}^{n-1}}U^{\frac{2n-2}{n-2}}(z,0)dz+o(\varepsilon\left|\ln\varepsilon\right|)
\end{align*}
To conclude observe that 
\begin{eqnarray*}
\int_{\mathbb{R}^{n-1}}U^{2}(z,0)dz=\omega_{n-1}I_{n-2}^{n-2} & \text{ and } & \int_{\mathbb{R}^{n-1}}U^{\frac{2n-2}{n-2}}(z,0)dz=\omega_{n-1}I_{n-1}^{n-2}
\end{eqnarray*}
where $I_{\beta}^{\alpha}=\int_{0}^{\infty}\frac{s^{\alpha}}{(1+s^{2})^{\beta}}ds$.
The thesis follows after that we observe that $I_{n-1}^{n-2}=\frac{n-3}{2(n-2)}I_{n-2}^{n-2}$
(for a proof, see \cite{A3}, Lemma 9.4 (b)).
\end{proof}

\subsection{Proof of Theorem \ref{thm:main}: completed}
\begin{proof}
 Let us introduce 
\[
\hat{I}(       d    ,q)=       \alpha_n d    \varphi(q)-\beta_n\ln       d    .
\]
If $q_{0}$ is a local minimizer of $\varphi(q)$ with $\varphi(q_{0})>0$,
set $       d    _{0}={\beta_n\over \alpha_n \varphi(q_{0})}>0$. Thus the pair $(       d    _{0},q_{0})$
is a critical point for $\hat{I}$. Moreover, since there exists a
neighborhood $B$ such that $\varphi(q)>\varphi(q_{0})$ on $\partial B$,
it is possible to find a neighborood $\tilde{B}\subset[a,b]\times\partial M$,
$(       d    _{0},q_{0})\in\tilde{B}$ such that $\hat{I}(       d    ,q)>\hat{I}(       d    _{0},q_{0})$
for $(       d    ,q)\in\partial\tilde{B}$. Since, in the subcritical
case, by (i) of Proposition \ref{expan} we have 
\[
I_{\varepsilon}(       d    ,q)=c_n(\varepsilon)+\varepsilon    \hat{I}(       d    ,q)    +o(\varepsilon)
\]
we get that, for $\varepsilon$ sufficiently small there exists a
$      ( d    ^{*},q^{*})\in\tilde{B}$ such that $W_{\varepsilon       d    ^{*},q^{*}}+\phi_{\varepsilon       d    ^{*},q^{*}}$
is a critical point for $I_{\varepsilon}$. Then, by (i) of Proposition \ref{expan},
$W_{\varepsilon       d    ^{*},q^{*}}+\phi_{\varepsilon       d    ^{*},q^{*}}\in\mathcal{H}$
is a solution for problem (\ref{eq:1.2}) in the subcritical case. 

The proof for the supercritical case follows in a similar way.
\end{proof}
\subsection{Some technicalities}
{\em
If $U$ is a solution of (\ref{eq:PL}) the following equalities hold
\begin{equation}
\int_{\mathbb{R}_{+}^{n}}t|\nabla U|^{2}dzdt=\frac{1}{2}\int_{\mathbb{R}^{n-1}}U^{2}(z,0)dz\label{eq:integrali1}
\end{equation}
\begin{equation}
\int_{\mathbb{R}_{+}^{n}}t|\nabla U|^{2}dzdt=2\int_{\mathbb{R}_{+}^{n}}t|\partial_{t}U|^{2}dzdt\label{eq:integrali2}
\end{equation}
\begin{equation}
\int_{\mathbb{R}_{+}^{n}}t\sum_{i=1}^{n-1}|\partial_{z_{i}}U|^{2}dzdt=\frac{1}{4}\int_{\mathbb{R}^{n-1}}U^{2}(z,0)dz.\label{eq:integrali3}
\end{equation}
}
\begin{proof}
To simplify the notation, we set 
\[
\eta=(z,t)\in\mathbb{R}_{+}^{n},\text{ where }z\in\mathbb{R}^{n-1}\text{ and }t\ge0.
\]
The first estimate can be obtained by integration by parts, and taking into account that 
  $\Delta U=0$; indeed
\begin{align*}
\int_{\mathbb{R}_{+}^{n}}\eta_{n}|\nabla U|^{2}d\eta & =-\sum_{l=1}^{n}\int_{\mathbb{R}_{+}^{n}}U\partial_{l}[\eta_{n}\partial_{l}U]d\eta\\
 & =-\int_{\mathbb{R}_{+}^{n}}U\partial_{n}Ud\eta-\int_{\mathbb{R}_{+}^{n}}\eta_nU\Delta Ud\eta\\
 & =-\frac{1}{2}\int_{\mathbb{R}_{+}^{n}}\partial_{n}\left[U^{2}\right]d\eta=\frac{1}{2}\int_{\mathbb{R}^{n-1}}U^{2}(z,0)dz.
\end{align*}
To obtain (\ref{eq:integrali2}) we proceed in a similar way: since
$\Delta U=0$ we have
\begin{align*}
0 & =-\int_{\mathbb{R}_{+}^{n}}\Delta U\eta_{n}^{2}\partial_{n}Ud\eta=\sum_{l=1}^{n}\int_{\mathbb{R}_{+}^{n}}\partial_{l}U\partial_{l}[\eta_{n}^{2}\partial_{n}U]d\eta\\
 & =\int_{\mathbb{R}_{+}^{n}}2\eta_{n}|\partial_{n}U|^{2}d\eta+\sum_{l=1}^{n}\int_{\mathbb{R}_{+}^{n}}\eta_{n}^{2}\partial_{l}U\partial_{ln}^{2}Ud\eta\\
 & =\int_{\mathbb{R}_{+}^{n}}2\eta_{n}|\partial_{n}U|^{2}d\eta+\frac{1}{2}\int_{\mathbb{R}_{+}^{n}}\eta_{n}^{2}\partial_{n}|\nabla U|^{2}d\eta\\
 & =\int_{\mathbb{R}_{+}^{n}}2\eta_{n}|\partial_{t}U|^{2}d\eta-\int_{\mathbb{R}_{+}^{n}}\eta_{n}|\nabla U|^{2}d\eta
\end{align*}
so (\ref{eq:integrali2}) is proved. Equation (\ref{eq:integrali3})
is a direct consequence of the first two equalities. In fact by(\ref{eq:integrali2})
we have 
\[
\]
\begin{align*}
\int_{\mathbb{R}_{+}^{n}}\eta_{n}|\nabla U|^{2}d\eta & =\int_{\mathbb{R}_{+}^{n}}\eta_{n}\sum_{i=1}^{n-1}|\partial_{i}U|^{2}d\eta+\int_{\mathbb{R}_{+}^{n}}\eta_{n}|\partial_{n}U|^{2}d\eta\\
 & =\int_{\mathbb{R}_{+}^{n}}\eta_{n}\sum_{i=1}^{n-1}|\partial_{i}U|^{2}d\eta+\frac{1}{2}\int_{\mathbb{R}_{+}^{n}}\eta_{n}|\nabla U|^{2}d\eta
\end{align*}
thus 
\[
\int_{\mathbb{R}_{+}^{n}}\eta_{n}\sum_{i=1}^{n-1}|\partial_{i}U|^{2}d\eta=\frac{1}{2}\int_{\mathbb{R}_{+}^{n}}\eta_{n}|\nabla U|^{2}d\eta
\]
 and in light of (\ref{eq:integrali1}) we get the proof.\end{proof}


\begin{thebibliography}{99}
\bibitem{A1} S. Almaraz, {\em An existence theorem of conformal scalar-flat metrics on manifolds with
boundary,} Pacific J. Math. 248 (2010), no. 1, 1--22.

\bibitem{A2} S. Almaraz, {\em Blow-up phenomena for scalar-flat metrics on manifolds with boundary,}
J. Differential Equations 251 (2011), no. 7, 1813--1840.

\bibitem{A3} S. Almaraz, {\em A compactness theorem for scalar-flat metrics on manifolds with boundary,} Calc. Var. Partial Differential Equations 41 (2011), no. 3-4, 341--386.

 
\bibitem{A} T. Aubin,  {\em Equations differentielles non lineaires et probleme de Yamabe concernant 
la courbure scalaire,} J. Math. Pures Appl., 55 (1976) 269--296,.

 
\bibitem{B} S. Brendle, {\em Blow-up phenomena for the Yamabe equation,} J. Amer. Math. Soc.,
21 (2008) 951--979.	


\bibitem{BC} S. Brendle, S. Chen, {\em An existence theorem for the Yamabe problem on manifolds with
boundary } J. Eur. Math. Soc.  16 (2014), no. 5, 991--1016.

\bibitem{BM} S. Brendle, F. C. Marques, {\em Blow-up phenomena for the Yamabe equation II,}  J.
Differential Geom., 81(2009) 225--250.

 

  
\bibitem{C}  P. Cherrier,  
{\em Problemes de Neumann non lineaires sur les varietes riemanniennes}.
J. Funct. Anal. 57 (1984), no. 2, 154--206. 


\bibitem{D1} O. Druet, {\em From one bubble to several bubbles: the low-dimensional case.} J. Differ. Geom. 63(3)   (2003)
399--473.

\bibitem{D2} O. Druet, {\em Compactness for Yamabe metrics in low dimensions.} Int. Math. Res. Not. 23 (2004) 1143--1191

\bibitem{DH1}  O. Druet, E. Hebey, {\em Blow-up examples for second order elliptic PDEs of critical Sobolev growth.}
Trans. Amer. Math. Soc. 357(5) (2005) 1915--1929. 

\bibitem{DH2}  O. Druet, E. Hebey, {\em Elliptic equations of Yamabe type.} Int. Math. Res. Surv. 1 (2005) 1--113. 


\bibitem{E1} J. F. Escobar, {\em Conformal deformation of a Riemannian metric to a scalar flat metric
with constant mean curvature on the boundary}. Ann. of Math. (2) 136 (1992), no. 1,
1--50.

\bibitem{E2} J. F. Escobar, {\em  The Yamabe problem on manifolds with boundary,} J. Differential
Geom. 35  (1992)  21--84

\bibitem{EPV} P. Esposito, A. Pistoia, J. Vetois, {\em The effect of linear perturbations on the Yamabe
problem,} Math. Ann. 358 (2014), no. 1-2, 511--560.

\bibitem{EP} P. Esposito, A. Pistoia, {\em Blowing-up solutions for the Yamabe equation.} Port. Math. 71 (2014), no. 3-4, 249--276

\bibitem{FO1} V. Felli, M. Ould Ahmedou, {\em Compactness results in conformal deformations of Riemannian metrics on manifolds with boundaries,} Math. Z. 244 (2003) 175--210.

\bibitem{FO2} V. Felli, M. Ould Ahmedou, {\em A geometric equation with critical nonlinearity on the
boundary,} Pacific J. Math. 218 (1) (2005) 75--99.

 

\bibitem{HL} Z-C. Han, Y.Y. Li, {\em The Yamabe problem on manifolds with boundary: existence
and compactness results,} Duke Math. J. 99 (1999), no. 3, 489--542.

 
\bibitem{KMS} M.A. Khuri, F.C. Marques,   R.M. Schoen, {\em A compactness theorem for the Yamabe problem,} J. Differential Geom., 81 (2009) 143-196.

 
 
  
\bibitem{M1} F. Marques, {\em Existence results for the Yamabe problem on manifolds with boundary,}
Indiana Univ. Math. J. 54 (2005) 1599--1620.

 
\bibitem{M2} F. C. Marques, {\em Conformal deformations to scalar-flat metrics with constant mean
curvature on the boundary,} Comm. Anal. Geom. 15 (2007), no. 2, 381--405.

\bibitem{MPV} A. M. Micheletti, A. Pistoia, J. Vetois, {\em Blow-up solutions for asymptotically critical elliptic equations on Riemannian manifolds,} Indiana Univ. Math. J. 58 (2009),
no. 4, 1719--1746.

 \bibitem{N} R. Nittka, {\em Regularity of solutions of linear second order elliptic and parabolic boundary value problems on Lipschitz domains. } J. Differential Equations 251 (2011), no. 4-5, 860--880.

 
\bibitem{S} R. Schoen, {\em Conformal deformation of a Riemannian metric to constant scalar curvature,} J. Differential Geometry, 20 (1984) 479--495.

 
\bibitem{T} N. Trudinger, {\em Remarks concerning the conformal deformation of Riemannian structures on compact manifolds,} Ann. Scuola Norm. Sup. Pisa Cl. Sci., 22(3) (1968) 165--274.

 
 \bibitem{Y} H. Yamabe, {\em On a deformation of Riemannian structures on compact manifolds,} Osaka
Math. J., 12   (1960) 21--37.
\end{thebibliography}
\end{document}